\newtheorem{theorem}{Theorem}[section]
\newtheorem{lemma}[theorem]{Lemma}
\newtheorem{corollary}[theorem]{Corollary}
\newtheorem{proposition}[theorem]{Proposition}
\theoremstyle{definition}
\newtheorem{definition}[theorem]{Definition}
\newtheorem{example}[theorem]{Example}
\theoremstyle{remark}
\newtheorem{remark}[theorem]{Remark}
\newtheorem{ex-prop}[theorem]{Example-Proposition}
\newtheorem{algorithm}[theorem]{Algorithm}
\numberwithin{equation}{section}
\definecolor{gray}{rgb}{.5,.5,.5}
\definecolor{black}{rgb}{0,0,0}
\definecolor{blue}{rgb}{0,0,1}
\definecolor{red}{rgb}{1,0,0}
\definecolor{green}{rgb}{0,1,0}
\definecolor{yellow}{rgb}{1,1,.4}
\definecolor{purple}{rgb}{1,0,1}
\definecolor{gold}{rgb}{.5,.5,.2}
\definecolor{darkgreen}{rgb}{0,.5,0}
\definecolor{greenbean}{RGB}{199, 237, 204}
\definecolor{RED}{rgb}{1,0,0}
\newcommand{\crossone}{
\begin{tikzpicture}[baseline=-2]
\draw[white,line width=1.5pt,double=black,double distance=1.5pt] (0,-0.1) -- (0.3,0.2);
\draw[white,line width=1.5pt,double=black,double distance=1.5pt] (0,0.2) -- (0.3,-0.1);
\end{tikzpicture}}
\newcommand{\crosszero}{
\begin{tikzpicture}[baseline=-2]
\draw[white,line width=1.5pt,double=black,double distance=1.5pt] (0,0.2) -- (0.3,-0.1);
\draw[white,line width=1.5pt,double=black,double distance=1.5pt] (0,-0.1) -- (0.3,0.2);
\end{tikzpicture}}
\begin{document}
 
\title{Crossing numbers of Random Two-Bridge Knots}

\author{Moshe Cohen}
\address{Andrew and Erna Viterbi Faculty of Electrical Engineering, Technion -- Israel Institute of Technology, Haifa 32000, Israel}
\email{mcohen@tx.technion.ac.il}

\author{Chaim Even-Zohar}
\address{Einstein Institute of Mathematics, The Hebrew University of Jerusalem, Jerusalem, 91904, Israel}
\email{chaim.evenzohar@mail.huji.ac.il}

\author{Sunder Ram Krishnan}
\address{Andrew and Erna Viterbi Faculty of Electrical Engineering, Technion -- Israel Institute of Technology, Haifa 32000, Israel}
\email{eeksunderram@gmail.com}

\thanks{The first and third authors were supported in part by the funding from the European Research Council under Understanding Random Systems via Algebraic Topology (URSAT), ERC Advanced Grant 320422. The second author was supported by BSF 2012188.}

\begin{abstract}
In a previous work, the first and third authors studied a random knot model for all two-bridge knots using billiard table diagrams. Here we present a closed formula for the distribution of the crossing numbers of such random knots. We also show that the probability of any given knot appearing in this model decays to zero at an exponential rate as the length of the billiard table goes to infinity. This confirms a conjecture from the previous work.
\end{abstract}

\subjclass[2000]{57M25, 57M27; 05C80; 60C05; 60B99}

\maketitle

\section{Introduction}
\label{sec:intro}

There is a recent resurgence of interest in the study of random knots. Several new random models focus on diagrams for which knot invariants can be calculated more easily. The random model in this work is a billiard trajectory with randomly chosen crossings, as introduced in \cite{CoKr} by the first and third authors.

The second author studies other random knot models in \cite{EZ} and in work with Hass, Linial, and Nowik \cite{EZHLN} using petal diagrams of Adams et al. \cite{Adams:petal} and random grid diagrams. Other random knot models can be found in work by Dunfield, Obeidin et al. \cite{Dun:knots}, by Cantarella, Chapman and Mastin~\cite{CanChaMas}, and by Westenberger \cite{West}.  Previous models for random knotting include the closures of braids obtained from random walks in braid groups, and, more prominently, the knotting of random walks in three dimensions.  This last setting was featured in the textbook ``Random knotting and linking'' edited by Millet and Sumners~\cite{Series1994}. The reader can find more specific references in \cite{EZHLN} and~\cite{CoKr}.
	
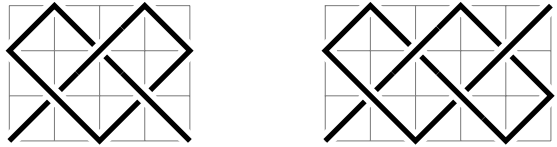
\begin{figure}[h]
\begin{center}
\end{center}
\begin{tikzpicture}[scale=0.6]
\draw[step=1,gray,thin] (0,0) grid (4,3);
\draw[step=1,gray,thin] (6.99,0) grid (12,3);
\draw[white,line width=2.0pt,double=black,double distance=2.0pt] (0,0) -- (1.5,1.5);
\draw[white,line width=2.0pt,double=black,double distance=2.0pt] (2.5,2.5) -- (3,3) -- (4,2) -- (2,0) -- (0,2) -- (1,3) -- (2.5,1.5);
\draw[white,line width=2.0pt,double=black,double distance=2.0pt] (1.5,1.5) -- (2.5,2.5);
\draw[white,line width=2.0pt,double=black,double distance=2.0pt] (2.5,1.5) -- (4,0);
\draw[white,line width=2.0pt,double=black,double distance=2.0pt] (7,0) -- (8.5,1.5);
\draw[white,line width=2.0pt,double=black,double distance=2.0pt] (9.5,2.5) -- (10,3) -- (12,1) -- (11,0) -- (10.5,0.5);
\draw[white,line width=2.0pt,double=black,double distance=2.0pt] (9.5,1.5) -- (8,3) -- (7,2) -- (9,0) -- (12,3);
\draw[white,line width=2.0pt,double=black,double distance=2.0pt] (8.5,1.5) -- (9.5,2.5);
\draw[white,line width=2.0pt,double=black,double distance=2.0pt] (9.5,1.5) -- (10.5,0.5);
\end{tikzpicture}
\caption{Billiard table diagrams for the trefoil and the figure-eight knots.}
\label{trefoil-and-figure-eight}
\end{figure}

Denote by $T(a,b)$ the planar trajectory of a billiard ball in $[0,a]\times[0,b]$ fired at slope one from the lower left corner, where $a$ and $b$ are coprime integers.  \emph{Billiard table diagrams} are obtained from these curves by deciding the crossing information and connecting the two ends outside. See Figure~\ref{trefoil-and-figure-eight} for billiard table diagrams of the trefoil and the figure-eight knots, with $a=3$ for both and $b=4$ and $5$, respectively.

The billiard trajectory $T(a,b)$ is plane isotopic to the curve given as $x(t)=\cos at$ and $y(t)=\cos bt$, where $t \in [0,\pi]$. Such oscillating curves appeared in several natural constructions of knots as parametrized paths in three dimensions. \emph{Harmonic knots} are obtained by closing the open path $(\cos at$, $\cos bt$, $\cos ct)$, where $t \in [0,\pi]$ and $a,b,c$ are pairwise coprime integers~\cite{Comstock}. \emph{Lissajous knots} are parametrized by the already-closed curve $(\cos(at+\phi), \cos(bt+\theta), \cos(ct+\psi))$ where $t \in [0,2 \pi]$, and $\phi,\theta,\psi \in \mathbb{R}$ are fixed phase shifts~\cite{BHJS,JonPrz,HosZir,BDHZ}.

Not all knots are harmonic or Lissajous. However, the diagrams resulting from the projection of Lissajous curves to the $xy$-plane give rise to all knots by suitable choice of the crossing information. This was shown by Lamm~\cite{Lam:dis} in the study of  \emph{Fourier knots}. 

Koseleff and Pecker~\cite{KosPec3} prove a similar statement for the \emph{open} harmonic path $(\cos at,\cos bt)$, in their work on \emph{Chebyshev knots}. They reparametrize it as $(T_a(t),T_b(t))$ using the Chebyshev polynomials $T_n(\cos \theta)=\cos n\theta$, and show that any choice for the crossings can be realized by some $z(t)=T_c(t+\phi)$. Vassiliev represents all knots by polynomials in~\cite{Vas}.

\medskip
The first and third authors of the present paper turn these constructions into a model for random knots \cite{CoKr} by deciding the crossing information $\{\crosszero,\crossone\}^n$ with coin flips independently and uniformly at each crossing of $T(a,b)$. Here we continue with the study of random knots with bridge number at most two, obtained by setting $a=3$ and letting $b=n+1$ where $n=0$ or $1 \bmod 3$. We denote by $D_n$ a random knot diagram in this model and by $K_n$ the resulting random knot.

Much of the literature on random knots focuses on the probability of obtaining a given knot. We progress to study the distribution of the random variable $\text{c}(K_n)$, the crossing number of a random knot. Indeed, it is natural to ask how the number of crossings $n$ in a random billiard table diagram~$D_n$ relates to the minimal number of crossings in any diagram of the resulting knot~$K_n$. In this respect, we prove the following main result.

\medskip
\noindent\textbf{Theorem \ref{cor:main}.}\textit{
The probability that $c \in \{3,\dots,n\}$ is the crossing number of a random two-bridge knot $K_n$ is given by
$$ P\left[\text{\textnormal{c}}\left(K_n \right) = c\right] \;=\;   \sum\limits_{\substack{k \in \{0,\dots,c-2\} \\ c+k \equiv n \text{\textnormal{ (mod 3)}}}} \frac{\binom{c-2}{k}}{2^{n-1}} \cdot F\left(\frac{n-c-k}{3}, \;c+k\right) $$
where 
$$ F(m,\ell) \;:=\; \tfrac{m^2+(\ell+5)m+2}{2} \binom{3m+\ell}{m} - \tfrac{m^2 + (2\ell+9)m + (\ell^2+7\ell+2)}{2} \binom{3m+\ell}{<m} \;.$$
}

\medskip
\noindent\textit{Notation.} 
$\tbinom{n}{<m} := \tbinom{n}{0} + \tbinom{n}{1} + \dots + \tbinom{n}{m-1}$, and $\tbinom{n}{m}:=0$ for $m<0$.
\medskip

This formula implies that the crossing number asymptotically almost surely grows linearly in~$n$. Hence, a random billiard table knot diagram usually cannot be simplified in terms of crossings by more than a multiplicative constant. We explicitly find the asymptotic ratio of this linear growth.

\medskip
\noindent\textbf{Corollary \ref{cor_beta}.}
\textit{The crossing number of a random knot $K_n$ is
$$ \text{\textnormal{c}}\left(K_n\right) \;\;=\;\; (\beta + o(1))n $$
in probability, where $\beta = \frac{\sqrt{5}-1}{4} \approx 0.309 $.
}

\medskip
\noindent\textit{Notation.} 
As usual, $f(n)=o(1)$ means $f(n) \to 0$ as $n \to \infty$. Similarly, a random variable $F_n = o(1)$ \emph{in probability} if~$P[|F_n|>\varepsilon] \to 0$ as $n \to \infty$ for every $\varepsilon>0$.
\medskip

Our analysis of the crossing number distribution relies on Theorem~\ref{thm:main}, where we derive a closed formula for the probability of obtaining any particular two-bridge knot in our random model. A recursive formula for this probability was given in Theorem 4.2 of~\cite{CoKr}. 

As conjectured by the first and third authors in \cite[Conjecture 5.4]{CoKr}, this probability approaches zero as $n$ tends to infinity. Here we answer the subsequent question \cite[Question 5.5]{CoKr} regarding its asymptotic rate. 

\medskip
\vbox{\noindent\textbf{Corollary \ref{prop:main}.}
\textit{The probability of any two-bridge knot $K$ appearing in the random model is
$$ P\left[K_n = K\right] \;\;=\;\; \alpha ^ {\displaystyle(1 + o(1))n} $$
where $\alpha = \sqrt[3]{\frac{27}{32}} \approx 0.945$.
}}

\medskip
\noindent\textbf{Outline. }  In Section \ref{sec:notation}, we provide some background on billiard table diagrams of two-bridge knots and discuss the key ideas in the derivation of our closed formulae. In Section~\ref{sec:insertions} we present the main counting arguments. The results on the distribution of knots
and their crossing numbers are then proved in Sections~\ref{sec:mainthm} and~\ref{sec:crossing}, respectively.

\section{Billiard table diagrams for two-bridge knots}\label{sec:notation}

We first set up some notation and discuss background material on billiard table diagrams for two-bridge knots. We recall a few important definitions from the previous work \cite{CoKr}, give some details on them as required here, and refer the reader to the relevant sections there for further clarifications.

Below we identify a crossing assignment, chosen uniformly from $\{\crosszero,\crossone\}^n$, with a binary \emph{word} in $\{0,1\}^n$, where the crossings are ordered from left to right. For example, in Figure~\ref{trefoil-and-figure-eight}, the diagrams for the trefoil and the figure-eight knots are represented by the words $101$ and $1010$, respectively.  Thus we re-contextualize $P[K_n = K]$ as the probability of certain sequences of ones and zeroes appearing amongst all the $2^n$ possible binary words of length $n$. 

\begin{definition}
\label{def:reductions}
\cite[Definitions 2.6 and 2.8]{CoKr}
(I) An \emph{internal reduction move} is the  deletion of a triple of the type $000$ or $111$ appearing anywhere in the word. (II) An \emph{external reduction move} is the deletion of a triple $001$ or $110$ from the start or a triple $100$ or $011$ from the end of the word.
\end{definition}

See Figure~\ref{internal-and-external-moves} for examples of these moves. Following \cite[Propositions 2.7 and 2.9]{CoKr}, these triples can be deleted without changing the knot type.

\begin{figure}[h]
\begin{center}
\end{center}
\begin{tikzpicture}[scale=0.5,white,line width=2.0pt,double distance=2.0pt]
\node[black] at (-1,5.5) {(II)};

\begin{scope}[shift={(0,8)}]
\node[black] at (-1,1.5) {(I)};

\node[black] at (0.5,0.5) {C};
\node[black] at (0.5,1.5) {B};
\node[black] at (0.5,2.5) {A};
\draw[double=black] (1,2.5) -- (1.5,3) -- (2,2.5);
\draw[double=black] (1,1.5) -- (2,0.5);
\draw[double=black] (1,0.5) -- (2,1.5);
\draw[double=lightgray] (2,0.5) -- (2.5,0) -- (3,0.5);
\draw[double=lightgray] (2,1.5) -- (3,2.5);
\draw[double=lightgray] (2,2.5) -- (3,1.5);
\draw[double=lightgray] (3,2.5) -- (3.5,3) -- (4,2.5);
\draw[double=lightgray] (3,0.5) -- (4,1.5);
\draw[double=lightgray] (3,1.5) -- (4,0.5);
\draw[double=lightgray] (4,0.5) -- (4.5,0) -- (5,0.5);
\draw[double=lightgray] (4,1.5) -- (5,2.5);
\draw[double=lightgray] (4,2.5) -- (5,1.5);
\draw[double=black] (5,2.5) -- (5.5,3) -- (6,2.5);
\draw[double=black] (5,1.5) -- (6,0.5);
\draw[double=black] (5,0.5) -- (6,1.5);
\draw[double=black] (6,0.5) -- (6.5,0) -- (7,0.5);
\draw[double=black] (6,1.5) -- (7,2.5);
\draw[double=black] (6,2.5) -- (7,1.5);
\node[black] at (7.5,0.5) {C};
\node[black] at (7.5,1.5) {B};
\node[black] at (7.5,2.5) {A};
\draw[->,black] (8.75,1.5) -- (9.75,1.5);
\node[black] at (11,0.5) {C};
\node[black] at (11,1.5) {B};
\node[black] at (11,2.5) {A};
\draw[double=black] (11.5,2.5) -- (12,3) -- (12.5,2.5);
\draw[double=black] (11.5,1.5) -- (12.5,0.5);
\draw[double=black] (11.5,0.5) -- (12.5,1.5);
\draw[double=lightgray] (12.5,0.5) -- (15.5,2.5);
\draw[double=lightgray] (12.5,1.5) -- (15.5,1.5);
\draw[double=lightgray] (12.5,2.5) -- (15.5,0.5);
\draw[double=black] (15.5,2.5) -- (16,3) -- (16.5,2.5);
\draw[double=black] (15.5,1.5) -- (16.5,0.5);
\draw[double=black] (15.5,0.5) -- (16.5,1.5);
\draw[double=black] (16.5,0.5) -- (17,0) -- (17.5,0.5);
\draw[double=black] (16.5,1.5) -- (17.5,2.5);
\draw[double=black] (16.5,2.5) -- (17.5,1.5);
\node[black] at (18,0.5) {C};
\node[black] at (18,1.5) {B};
\node[black] at (18,2.5) {A};
\draw[->,black] (19.25,1.5) -- (20.25,1.5);
\node[black] at (21.5,0.5) {C};
\node[black] at (21.5,1.5) {B};
\node[black] at (21.5,2.5) {A};
\draw[double=black] (22,2.5) -- (22.5,3) -- (23,2.5);
\draw[double=black] (22,1.5) -- (23,0.5);
\draw[double=black] (22,0.5) -- (23,1.5);
\draw[double=lightgray] (23,0.5) -- (25.5,0.5);
\draw[double=lightgray] (23,1.5) -- (25.5,1.5);
\draw[double=lightgray] (23,2.5) -- (25.5,2.5);
\draw[double=black] (25.5,0.5) -- (26,0) -- (26.5,0.5);
\draw[double=black] (25.5,2.5) -- (26.5,1.5);
\draw[double=black] (25.5,1.5) -- (26.5,2.5);
\draw[double=black] (26.5,2.5) -- (27,3) -- (27.5,2.5);
\draw[double=black] (26.5,0.5) -- (27.5,1.5);
\draw[double=black] (26.5,1.5) -- (27.5,0.5);
\node[black] at (28,0.5) {A};
\node[black] at (28,1.5) {B};
\node[black] at (28,2.5) {C};

\end{scope}

\node[black] at (0.5,4.5) {C};
\node[black] at (0.5,5.5) {B};
\node[black] at (0.5,6.5) {A};

\begin{scope}[shift={(.5,0)}]

\draw[double=black] (0.5,4.5) -- (1,4) -- (1.5,4.5);
\draw[double=black] (0.5,5.5) -- (1.5,6.5);
\draw[double=black] (0.5,6.5) -- (1.5,5.5);
\draw[double=black] (1.5,6.5) -- (2,7) -- (2.5,6.5);
\draw[double=black] (1.5,5.5) -- (2.5,4.5);
\draw[double=black] (1.5,4.5) -- (2.5,5.5);
\draw[double=lightgray] (2.5,4.5) -- (3,4) -- (3.5,4.5);
\draw[double=lightgray] (2.5,5.5) -- (3.5,6.5);
\draw[double=lightgray] (2.5,6.5) -- (3.5,5.5);
\draw[double=lightgray] (3.5,6.5) -- (4,7) -- (4.5,6.5);
\draw[double=lightgray] (3.5,5.5) -- (4.5,4.5);
\draw[double=lightgray] (3.5,4.5) -- (4.5,5.5);
\draw[double=lightgray] (4.5,4.5) -- (5,4) -- (5.5,4.5);
\draw[double=lightgray] (4.5,6.5) -- (5.5,5.5);
\draw[double=lightgray] (4.5,5.5) -- (5.5,6.5);
\draw[double=lightgray] (5.5,6.5) -- (6,7);
\draw[double=lightgray] (5.5,5.5) -- (6,5) -- (5.5,4.5);

\end{scope}

\node[black] at (11,4.5) {C};
\node[black] at (11,5.5) {B};
\node[black] at (11,6.5) {A};

\begin{scope}[shift={(1.5,0)}]

\draw[->,black] (7.25,5.5) -- (8.25,5.5);

\end{scope}

\node[black] at (21.5,4.5) {C};
\node[black] at (21.5,5.5) {B};
\node[black] at (21.5,6.5) {A};

\begin{scope}[shift={(2,0)}]

\draw[double=black] (9.5,4.5) -- (10,4) -- (10.5,4.5);
\draw[double=black] (9.5,5.5) -- (10.5,6.5);
\draw[double=black] (9.5,6.5) -- (10.5,5.5);
\draw[double=black] (10.5,6.5) -- (11,7) -- (11.5,6.5);
\draw[double=black] (10.5,5.5) -- (11.5,4.5);
\draw[double=black] (10.5,4.5) -- (11.5,5.5);
\draw[double=lightgray] (11.5,4.5) -- (12,4) -- (14,4) -- (15,5) -- (15,7);
\draw[double=lightgray] (11.5,5.5) -- (12.5,6.5);
\draw[double=lightgray] (11.5,6.5) -- (12.5,5.5);
\draw[double=lightgray] (12.5,6.5) -- (13,7) -- (14,6) -- (13,5) -- (12.5,5.5);

\end{scope}

\begin{scope}[shift={(3,0)}]

\draw[->,black] (16.25,5.5) -- (17.25,5.5);

\end{scope}

\begin{scope}[shift={(3.5,0)}]

\draw[double=black] (18.5,4.5) -- (19,4) -- (19.5,4.5);
\draw[double=black] (18.5,5.5) -- (19.5,6.5);
\draw[double=black] (18.5,6.5) -- (19.5,5.5);
\draw[double=black] (19.5,6.5) -- (20,7) -- (20.5,6.5);
\draw[double=black] (19.5,5.5) -- (20.5,4.5);
\draw[double=black] (19.5,4.5) -- (20.5,5.5);
\draw[double=black] (20.5,4.5) -- (21,4);
\draw[double=black] (20.5,5.5) -- (21,6) -- (20.5,6.5);

\end{scope}

\end{tikzpicture}
\caption{Examples for the two reduction moves in billiard table knot diagrams: \\ (I)~internal, deleting the triple $111$ from ``...011101...''; \\ (II)~external, deleting the suffix $100$ from ``...10100''. \\ Note that in (I) we half-twist the whole right hand side of the knot.}
\label{internal-and-external-moves}
\end{figure}

After performing all possible reduction moves, we end up with a unique reduced word whose form is defined below.

\begin{definition}
\label{def:reduced}
We say that a \emph{run} is a maximal sequence of one or more consecutive identical letters. A word that is composed of runs only of the form $0$, $1$, $00$, and $11$ is called \emph{reduced with respect to internal moves}. Such a word that also starts with $01$ or $10$ and ends with $01$ or $10$ is simply called \emph{reduced}. For example, the word $1010010$ is reduced with six runs.
\end{definition}

These notions assist in a simple description of the crossing numbers of two-bridge knots. 

\begin{proposition}[immediate from \cite{KosPec4}, Proposition 2.5]\label{cross}
The crossing number of the knot corresponding to a reduced word is equal to the number of runs in that word.
\end{proposition}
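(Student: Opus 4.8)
The plan is to pass from the combinatorics of the word to the classical invariants of the two-bridge knot it encodes, and then to quote the crossing-number formula of \cite{KosPec4}. First I would invoke the dictionary established in \cite{CoKr} that assigns to a word $w \in \{0,1\}^n$ a two-bridge knot, presenting that knot through the continued-fraction (Conway) data read off from the billiard diagram. The essential feature of this dictionary is that the maximal constant blocks of $w$ --- its runs --- are exactly the syllables of the reduced sequence that records the knot, and that each run contributes a single syllable. Under this correspondence the reduction moves of Definition~\ref{def:reductions} are precisely the identities that shorten that sequence without changing the knot type: the internal deletions of $000$ and $111$ remove a full period from an over-long block, while the external deletions trim redundant data at the two ends. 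Consequently a word that is reduced in the sense of Definition~\ref{def:reduced} corresponds to a sequence that admits no further such simplification.

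The second step is then immediate: \cite[Proposition~2.5]{KosPec4} evaluates the crossing number of a two-bridge knot as the number of terms in exactly this irreducible sequence. Transported back through the dictionary, the number of terms is the number of runs of $w$, so $\mathrm{c}(K)$ equals the number of runs, as asserted. The smallest cases are a useful sanity check: $101$ and $1010$ have three and four runs and give the trefoil and the figure-eight, whose crossing numbers are three and four; and the reduced word $1010010$ of Definition~\ref{def:reduced} has six runs, one of them of length two, and should likewise yield a knot of crossing number six.

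The hard part will be the matching in the first step, which is the only content beyond a citation. One must check that the purely word-theoretic definition of ``reduced'' --- all runs of length at most two, together with the boundary conditions forcing the first and last runs to have length one --- coincides exactly with the irreducibility hypothesis under which \cite[Proposition~2.5]{KosPec4} returns the \emph{minimal} crossing number, rather than merely the number of crossings drawn in the billiard diagram. In particular one must see that a run of length two still counts as a single term, so that the total is the number of runs and not the longer word length; that the external moves correspond to genuine end-reductions of the two-bridge data; and that the standard mirror and order ambiguities in the two-bridge classification leave the number of terms unchanged. Once this correspondence between reduced words and irreducible two-bridge data is in place, reading off the crossing number as the run count is purely formal.
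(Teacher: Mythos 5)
Your route --- translate the reduced word into two-bridge continued-fraction (Conway) data and quote the crossing-number count from \cite{KosPec4} --- differs from the paper's, and it is not an unreasonable plan: the paper itself attributes the proposition to \cite[Proposition~2.5]{KosPec4}. But as a proof your proposal is incomplete in exactly the place you yourself flag. The assertion that the runs of a reduced word are ``exactly the syllables'' of an irreducible two-bridge sequence, one term per run, is the entire mathematical content here, and you defer it (``the hard part will be the matching \dots the only content beyond a citation''), listing the verifications one ``must check'' without checking any of them. Nothing in the write-up establishes that a word reduced in the sense of Definition~\ref{def:reduced} yields data satisfying the irreducibility hypothesis under which \cite{KosPec4} returns the \emph{minimal} crossing number, nor that a two-letter run contributes a single term rather than two. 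Note that the naive dictionary coming from the billiard diagram assigns one $\pm 1$ continued-fraction entry per \emph{letter}, giving $n$ terms rather than $c$ runs, so the matching genuinely requires an argument --- essentially the collapse of each two-letter run to a single crossing --- and that argument is precisely what is missing.

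By contrast, the paper's proof is direct and self-contained and sidesteps the dictionary altogether: in the billiard diagram of the reduced word, each run of length two is replaced by a single crossing via a half-twist of one side of the diagram (Figure~\ref{two-to-one}), producing a diagram with exactly one crossing per run; one then observes that this diagram is alternating and reduced (no crossing is an isthmus), so by the Tait conjectures \cite{KauffmanBracket,Murasugi,Th} it realizes the crossing number. If you carry out the run-to-term bookkeeping you postponed, you will in effect be reproving this diagrammatic collapse, with the Tait-type minimality imported through \cite{KosPec4} instead of invoked directly; so to complete your proof, either do that verification explicitly or adopt the paper's construction, which achieves both steps at once.
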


\begin{proof} 
Consider a billiard table knot diagram that corresponds to a reduced word. A pair of consecutive identical crossings can be replaced with a single one, as demonstrated for the word $1010010$ in Figure~\ref{two-to-one}. Note that the resulting diagram is no longer a billiard table diagram. 

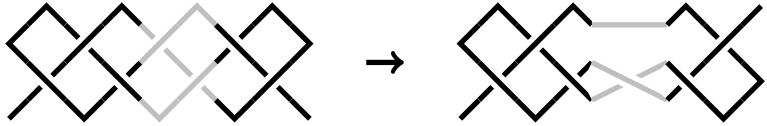
\begin{figure}[h]
\begin{center}
\end{center}
\begin{tikzpicture}[scale=0.5,white,line width=2.0pt,double distance=2.0pt]
\draw[double=black] (1,1.5) -- (0.5,2) -- (1,2.5);
\draw[double=black] (0.5,0) -- (1,0.5);
\draw[double=black] (1,2.5) -- (1.5,3) -- (2,2.5);
\draw[double=black] (1,0.5) -- (2,1.5);
\draw[double=black] (1,1.5) -- (2,0.5);
\draw[double=black] (2,0.5) -- (2.5,0) -- (3,0.5);
\draw[double=black] (2,2.5) -- (3,1.5);
\draw[double=black] (2,1.5) -- (3,2.5);
\draw[double=black] (3,2.5) -- (3.5,3) -- (4,2.5);
\draw[double=black] (3,0.5) -- (4,1.5);
\draw[double=black] (3,1.5) -- (4,0.5);
\draw[double=lightgray] (4,0.5) -- (4.5,0) -- (5,0.5);
\draw[double=lightgray] (4,2.5) -- (5,1.5);
\draw[double=lightgray] (4,1.5) -- (5,2.5);
\draw[double=lightgray] (5,2.5) -- (5.5,3) -- (6,2.5);
\draw[double=lightgray] (5,1.5) -- (6,0.5);
\draw[double=lightgray] (5,0.5) -- (6,1.5);
\draw[double=black] (6,0.5) -- (6.5,0) -- (7,0.5);
\draw[double=black] (6,1.5) -- (7,2.5);
\draw[double=black] (6,2.5) -- (7,1.5);
\draw[double=black] (7,2.5) -- (7.5,3) -- (8,2.5);
\draw[double=black] (7,1.5) -- (8,0.5);
\draw[double=black] (7,0.5) -- (8,1.5);
\draw[double=black] (8,1.5) -- (8.5,2) -- (8,2.5);
\draw[double=black] (8,0.5) -- (8.5,0);
\draw[->,black] (10,1.5) -- (11,1.5);
\draw[double=black] (13,1.5) -- (12.5,2) -- (13,2.5);
\draw[double=black] (12.5,0) -- (13,0.5);
\draw[double=black] (13,2.5) -- (13.5,3) -- (14,2.5);
\draw[double=black] (13,0.5) -- (14,1.5);
\draw[double=black] (13,1.5) -- (14,0.5);
\draw[double=black] (14,0.5) -- (14.5,0) -- (15,0.5);
\draw[double=black] (14,2.5) -- (15,1.5);
\draw[double=black] (14,1.5) -- (15,2.5);
\draw[double=black] (15,2.5) -- (15.5,3) -- (16,2.5);
\draw[double=black] (15,0.5) -- (16,1.5);
\draw[double=black] (15,1.5) -- (16,0.5);
\draw[double=lightgray] (16,0.5) -- (18,1.5);
\draw[double=lightgray] (16,1.5) -- (18,0.5);
\draw[double=lightgray] (16,2.5) -- (18,2.5);
\draw[double=black] (18,2.5) -- (18.5,3) -- (19,2.5);
\draw[double=black] (18,0.5) -- (19,1.5);
\draw[double=black] (18,1.5) -- (19,0.5);
\draw[double=black] (19,0.5) -- (19.5,0) -- (20,0.5);
\draw[double=black] (19,2.5) -- (20,1.5);
\draw[double=black] (19,1.5) -- (20,2.5);
\draw[double=black] (20,1.5) -- (20.5,1) -- (20,0.5);
\draw[double=black] (20,2.5) -- (20.5,3);
\end{tikzpicture}
\caption{Replacing a run of two crossings by one, with a half-twist of the right hand side of the knot.}
\label{two-to-one}
\end{figure}

By repeating this move for all runs of length two, we reduce the number of crossings in the knot diagram down to the number of runs in the word. One can easily verify that the resulting diagram is \emph{alternating}, which means that over-crossings and under-crossings occur alternately as one travels along it. It is also \emph{reduced} in the sense that no crossing is an \emph{isthmus} between two parts of the diagram. By the classical Tait conjectures, such a diagram realizes the crossing number~\cite{KauffmanBracket,Murasugi,Th}.
\end{proof}

One might wonder about the number of ways in which a given knot can be written in reduced form. Before turning to this issue, we make the following remark.
\begin{remark}
\label{property:symmetry}

There are three symmetries of the reduced word representation of two-bridge knots implicit in~\cite{KauffLamb:class}, \cite{KosPec4} and \cite{CoKr}.

\begin{enumerate}
\item The \emph{complement} word, obtained by flipping every digit, represents a mirror image of the knot. For example, $101$ is the left-handed trefoil and $010$ is the right-handed trefoil. If the knot is achiral, then we obtain two different representations of the same knot.
\item The \emph{reverse} word, e.g. $01001 \leftrightarrow 10010$, yields the same knot with its orientation reversed. Since all two-bridge knots are \emph{invertible}, this operation may further double the number of representations, as long as the reverse is different from the original word and its complement.
\item We resize every internal run in $w$ from length two to one and vice versa. For example, $010110 \leftrightarrow 0110010$. These changes correspond to the operation from Figure~\ref{two-to-one} and its inverse and indeed yield a billiard table diagram of the mirror image of the original knot.
\end{enumerate}
\end{remark}

It can be shown by the theory of terminated $\pm 1$ continued fraction expansions for rational knots that every two-bridge knot $K$ has a reduced word representation, unique except for these three simple operations~\cite[Theorem 2.19 and Lemma 2.20]{CoKr}. 

The third operation lets us switch between two different \emph{reduced lengths}, $\ell_0(K)$ and $\ell_1(K)$, where $\ell_i(K) \equiv i$ mod~$3$. The first two operations determine a \emph{multiplicity} $r(K)$, which is the number of ways to write a two-bridge knot $K$ in each reduced length. Taking symmetries into account, $r(K) \in \{1,2\}$ or $r(K) \in \{2,4\}$, depending up on whether one decides to distinguish between the two mirror images of chiral knots. Ernst and Sumners~\cite{ErnSum} include a discussion on chirality in their results on the number of two-bridge knots of a given crossing number.

Since the first and last runs in a reduced word consist of one letter each, and all the others of one or two letters, by Proposition~\ref{cross} we see that the following holds true.

\begin{corollary}\label{moshe}
A two-bridge knots $K$ with crossing number $c$ have reduced lengths $\ell_0(K)$ and $\ell_1(K)$ in the range $\{c,c+1,\dots,2c-2\}$ such that $\ell_i(K) \equiv i$ mod $3$.
\end{corollary}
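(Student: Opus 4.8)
The plan is to extract both assertions directly from the run structure of a reduced word, using Proposition~\ref{cross} to identify the crossing number with the number of runs and then invoking the congruence information recorded immediately before the statement.

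First I would fix any reduced word $w$ representing $K$. By Proposition~\ref{cross} the number of runs of $w$ equals the crossing number, so $w$ has exactly $c$ runs. By Definition~\ref{def:reduced} a reduced word begins with $01$ or $10$ and ends with $01$ or $10$, so its first and last runs each have length one, while each of the remaining $c-2$ internal runs has length one or two. Letting $k$ be the number of internal runs of length two, with $0\le k\le c-2$, the total length of $w$ is
$$ \ell \;=\; 2 \;+\; 2k \;+\; (c-2-k) \;=\; c+k , $$
the leading $2$ accounting for the two boundary runs. As $k$ ranges over $\{0,1,\dots,c-2\}$ this sweeps out exactly the interval $\{c,c+1,\dots,2c-2\}$, so every reduced length of $K$---in particular both $\ell_0(K)$ and $\ell_1(K)$---lies in this range.

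Next I would recover the two residues. The third operation of Remark~\ref{property:symmetry} toggles every internal run between lengths one and two, hence replaces $k$ by $c-2-k$ and so sends $\ell=c+k$ to $\ell'=2c-2-k$; thus the two reduced lengths satisfy $\ell+\ell'=3c-2\equiv 1\pmod 3$. Taking from \cite[Theorem 2.19 and Lemma 2.20]{CoKr} the fact that a two-bridge \emph{knot} (as opposed to a link) has no reduced word of length $\equiv 2\pmod 3$, both $\ell$ and $\ell'$ are congruent to $0$ or $1$; since the only such pair of residues summing to $1$ is $\{0,1\}$, one length is $\equiv 0$ and the other $\equiv 1$. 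Labelling them accordingly yields $\ell_i(K)\equiv i\pmod 3$, and in passing shows $\ell_0(K)\ne\ell_1(K)$.

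I expect the range half to be immediate once the boundary runs are seen to be short, so the only delicate point is the congruence half, which is not really combinatorial: it rests on the classification of reduced words up to the three symmetries and, crucially, on the fact that the residue $2$ is the signature of a two-bridge \emph{link} rather than a knot. Rather than reprove this, I would simply cite \cite{CoKr} and use the elementary observation that the reduction moves of Definition~\ref{def:reductions} delete three letters at a time and hence preserve length modulo $3$, so that the residue of a reduced word is an honest invariant of its reduction class.
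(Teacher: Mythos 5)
Your proof is correct and, for the substantive half, takes exactly the paper's route: the paper's one-line proof of Corollary~\ref{moshe} is precisely your run-length count via Proposition~\ref{cross} (first and last runs of length one, $k\in\{0,\dots,c-2\}$ internal two-letter runs, so $\ell=c+k$ sweeps $\{c,\dots,2c-2\}$), while the congruence $\ell_i(K)\equiv i\pmod 3$ is simply quoted from the preceding discussion resting on \cite{CoKr}. Your additional derivation of the residues---$\ell+\ell'=3c-2\equiv 1\pmod 3$ combined with the exclusion of residue $2$ for knots, and reduction moves preserving length mod $3$---is a sound supplement that the paper leaves implicit; the only caveat is that operation (3) of Remark~\ref{property:symmetry} yields a word for the \emph{mirror image}, so to conclude that $\ell'$ is a reduced length of $K$ itself you should compose with the complement (operation (1), which preserves length and run structure), a gloss the paper itself also makes silently.
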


\subsection*{Main proof idea}

With the background material behind us, we turn now to the presentation of the key ideas used in proving our main results.

The proof of the new closed formula for the knot distribution in Theorem~\ref{thm:main} proceeds via counting words in $\{0,1\}^n$ that represent a given knot $K$. In this work, rather than performing reduction moves as in \cite{CoKr}, we start from a reduced word and count the number of $n$-letter words obtainable by inserting triples back into it.

As in~\cite{CoKr}, we count external and internal insertions in separate stages considered in Subsections~\ref{sec:internal} and~\ref{sec:external}, respectively. The main contribution to the count comes from internal insertions. These triples can be inserted in an exponential number of ways, while the external ones introduce at most a quadratic factor as demonstrated in the proof of Corollary~\ref{prop:main}. In particular, we show in Proposition~\ref{count-I} that sequences of $m$ internal insertions into an $\ell$-letter word result in exactly $\tbinom{3m+\ell}{m} - \tbinom{3m+\ell}{<m}$ different words.

The main tool in the proof of this central proposition is the \emph{location map} $\Lambda$, defined on every word~$w'$ obtained from a  word~$w$ of length $\ell$ by  $m$ internal insertions. This explicit definition requires the choice of a canonical sequence of insertions that takes~$w$ to~$w'$. The image $\Lambda(w')$ is then a subset of $\{1,\dots,3m+\ell\}$ of size at most~$m$ that essentially records the locations at which these internal insertions take place.

Since the location map $\Lambda$ is shown to be injective, the main challenge remains to characterize the subsets $L \subseteq \{1,\dots,3m+\ell\}$ that come from insertion locations. To that end, we introduce an algorithm that tries to reconstruct a word $w'$ starting from a word $w$ and a set of locations~$L$, such that $\Lambda(w')=L$. Thus the counting of obtainable words $w'$ boils down to the analysis of the reconstruction algorithm and the number of possible inputs $L$ on which it succeeds. 

It turns out that those successful location sets $L$ of a given size can be characterized by a classical combinatorial result known as \emph{Bertrand's Ballot Theorem}. The precise number of words of length $n$ representing $K$ in Theorem~\ref{thm:main} is then obtained by summation over the size of~$L$, over external insertions, and over the reduced words that represent~$K$. The distribution of the crossing number in Theorem~\ref{cor:main} is deduced from it using Proposition~\ref{cross}.

\section{Counting insertions}
\label{sec:insertions}

\subsection{Counting internal insertions}
\label{sec:internal}

As described above, the closed formula regarding the distribution of knots in Theorem~\ref{thm:main} is derived by counting different \mbox{$n$-letter} words obtainable from a reduced $\ell$-letter word by inserting triples back. The main contribution comes from internal insertions, whose counting is the topic of this subsection.

\begin{definition}
\label{def:insertions}
Given a word $w \in \{0,1\}^{\ell}$ and an integer $m \geq 0$, the \emph{insertion set} $I(w,m)$ is the set of all words $w'$ obtained from $w$ by $m$ insertions of triples. 
\end{definition}

\begin{definition}
\label{def:internal}
An \emph{internal insertion} takes a word in $\{0,1\}^k$ to a word in $\{0,1\}^{k+3}$ by inserting a $000$ or a $111$ at any of the $k+1$ possible locations, including both ends. Given $w \in \{0,1\}^{\ell}$ and $m \geq 0$, the \emph{internal insertion set} $I'(w,m)$ is the set of all words in $\{0,1\}^{3m+\ell}$ obtained from $w$ by $m$ internal insertions.
\end{definition}

\begin{example} We see that
$100001001110 \in I'(101,3)$ by three insertions: $1(000)01(00(111)0)$.
\end{example}

It is immediate from the definitions that every $w' \in I'(w,m)$ can be reduced to $w$ by $m$ reduction moves and hence represents the same knot as $w$. Recalling the notation $\tbinom{n}{<m}$ introduced in Section~\ref{sec:intro}, we can now state the main result of this section.

\begin{proposition}
\label{count-I}
The number of words obtained from $w \in \{0,1\}^{\ell}$ by $m \geq 0$ internal insertions is
$$ \left|I'(w,m)\right| \;=\; \binom{3m+\ell}{m} - \binom{3m+\ell}{<m}\;.$$ 
\end{proposition}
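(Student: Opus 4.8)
The plan is to set up an explicit bijection between $I'(w,m)$ and a family of admissible \emph{location sets} $L \subseteq \{1,\dots,3m+\ell\}$, so that the count reduces to a purely combinatorial enumeration. Writing $N := 3m+\ell$, the striking feature of the target formula is that it depends only on $N$ and $m$, and not on the word $w$ itself; this suggests that each $w' \in I'(w,m)$ should be encoded by the \emph{positions} at which its insertions become visible, independently of the letters involved. Accordingly I would first define a location map $\Lambda \colon I'(w,m) \to \{\,L \subseteq \{1,\dots,N\} : |L| \le m\,\}$ and then identify exactly which subsets $L$ lie in its image.

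To define $\Lambda$ I would fix, for each $w' \in I'(w,m)$, a \emph{canonical} sequence of $m$ internal insertions producing $w'$ from $w$ (Definition~\ref{def:internal}) --- for instance the one obtained by always reducing the leftmost deletable triple of $w'$ and reading the insertion data in reverse. The set $\Lambda(w')$ records the location in $\{1,\dots,N\}$ of each insertion that creates a genuinely new run, i.e.\ a run whose letter differs from its surroundings; an insertion that merely lengthens an existing run is \emph{absorbed} and contributes nothing, which is exactly why $|\Lambda(w')|$ can drop below $m$. The injectivity of $\Lambda$ is then the content of a \emph{reconstruction algorithm}: given $w$ together with a set $L$, the algorithm reinserts the triples run-by-run at the prescribed locations and recovers a unique candidate $w'$ with $\Lambda(w') = L$. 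Establishing that the canonical choice is well defined, and that reconstruction genuinely inverts $\Lambda$, is where the confluence of internal reductions and the non-uniqueness of insertion sequences must be handled carefully.

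With $\Lambda$ in hand, the counting reduces to characterizing its image. I expect reconstruction to succeed precisely when $L = \{p_1 < \dots < p_k\}$ satisfies a \emph{ballot-type} inequality expressing that each marker leaves enough room for the runs that it and the later markers must accommodate; after the natural reindexing this becomes the clean condition that the $i$-th smallest element of $L$ is at least $3i$. Granting this characterization, the proposition follows by counting the admissible sets of every size $k$ with $0 \le k \le m$. Each such count is given by a ballot-type formula, and summing them --- using Bertrand's Ballot Theorem together with the cap $k \le m$ --- collapses to $\binom{N}{m} - \binom{N}{<m}$. I have checked that this summation reproduces the formula in the smallest cases (e.g.\ $(\ell,m) = (1,1),(1,2),(3,1)$), so the remaining work lies entirely in the two structural steps above.

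The main obstacle is therefore not the final enumeration, which is a routine application of the ballot theorem, but rather the construction of $\Lambda$ and the proof of its bijectivity: pinning down a canonical insertion sequence that is insensitive to the many orders in which the same $w'$ can be built, and proving that the reconstruction algorithm succeeds on exactly the ballot-admissible location sets. Once injectivity and the admissibility characterization are secured, the identity $|I'(w,m)| = \binom{3m+\ell}{m} - \binom{3m+\ell}{<m}$ drops out by summation.
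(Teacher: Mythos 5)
Your overall architecture is the same as the paper's: a canonical insertion sequence, a location map $\Lambda$ from $I'(w,m)$ into subsets of $\{1,\dots,3m+\ell\}$ of size at most $m$, a reconstruction algorithm that inverts $\Lambda$ and characterizes its image, and finally Bertrand's Ballot Theorem with $r=2$ summed over $|L|\le m$. However, your rule for what $\Lambda$ records is wrong, and it breaks the injectivity on which the whole bijection rests. You record only insertions that ``create a genuinely new run'' and absorb those that merely lengthen an existing run. Take $w=01$ and $m=1$: the word $00001$ arises by inserting $000$ before the $1$ (lengthening the initial $0$-run), and $01111$ arises by inserting $111$ after the $1$ (lengthening the $1$-run); under your rule both insertions are absorbed, so both distinct words of $I'(01,1)$ receive $L=\emptyset$, and $\Lambda$ is not injective. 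The paper's convention is essentially the opposite of yours: in the canonical sequence (Lemma~\ref{wloglemma}) every inserted triple consists of the letter \emph{opposite} to the letter that follows it, so a run-lengthening insertion such as $0(000)1$ is perfectly canonical and \emph{is} recorded --- its location alone determines its letters, which is exactly what makes a letter-free encoding possible. The only insertions omitted from $\Lambda(w')$ are $000$s at the end of the word, and these are recoverable for free because the stack in Algorithm~\ref{alg} pops $0$ by default when empty; thus $|\Lambda(w')|<m$ occurs precisely because of trailing $000$s, not because of absorption into runs. Note that your mechanism is inverted even there: an end-insertion $01(000)$ \emph{does} start a new run, yet it is the one the paper drops.

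A second, more minor, discrepancy is the orientation of your ballot condition. The paper's characterization of the image is a suffix condition: the algorithm succeeds if and only if every suffix of $[3m+\ell]$ contains at least twice as many non-elements of $L$ as elements, equivalently the $i$-th \emph{largest} element of $L$ is at most $3m+\ell-3i+1$. Your condition, that the $i$-th \emph{smallest} element is at least $3i$, is its mirror image under $j\mapsto 3m+\ell+1-j$; the counts of admissible sets of each size agree by this reflection, so your final enumeration $\binom{3m+\ell}{m}-\binom{3m+\ell}{<m}$ would come out correctly, but the condition must be matched to the direction in which your reconstruction consumes $L$ --- something you cannot settle until the recording rule above is repaired along the lines of the paper's canonical form.
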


\begin{remark}
In the forthcoming analysis of $I'(w,m)$ we do not require that $w$ is reduced with respect to internal moves, although this will be the setting in the application.
\end{remark}

Of course, there might be several different sequences of internal insertions that lead from one word to another. To avoid double counting, we choose a canonical way to perform these insertions. This necessitates the introduction of some terminology.

\begin{definition}
\label{def:moshe}
An insertion of a triple into $w \in \{0,1\}^k$ takes place \emph{at location} $j \in \{1,\dots,k+1\}$ if the new triple appears between the first $j-1$ letters of $w$ and the other $k-j+1$ letters. We say that the insertion takes place either \emph{before} $0$ or \emph{before} $1$, depending up on the $j$th letter of $w$, which then appears right after the inserted triple. The only exception is $j=k+1$, where there is no such letter and the insertion takes place \emph{at the end} of $w$. 
\end{definition}

The canonical way to insert internal moves promised above is then given by the following lemma.

\begin{lemma}
\label{wloglemma}
Let $w \in \{0,1\}^{\ell}$ and $w' \in I'(w,m)$. Then $w'$ is obtainable from $w$ by a unique sequence of insertions at increasing locations such that $000$ is inserted either before $1$ or at the end, and $111$ is inserted either before $0$ or at the end.
\end{lemma}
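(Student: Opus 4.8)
The statement has two parts, \emph{existence} and \emph{uniqueness} of the canonical sequence, and I would establish them separately. Throughout, ``location'' means the location in the current (growing) word in the sense of Definition~\ref{def:moshe}, and ``increasing'' is read as non-decreasing. The easy direction is existence; the real content is uniqueness.

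\emph{Existence.} I would start from an arbitrary sequence of $m$ internal insertions taking $w$ to $w'$, recording for each step the current word and its location, and then normalize it in two stages. The first stage is a \emph{local slide}: whenever a $000$ is inserted before a $0$ (that is, not immediately before a $1$ and not at the end), I increment its location by one; since the inserted letters are then adjacent to an identical letter, this leaves the resulting word unchanged, and iterating pushes the triple to the right end of its run, so that it ends up immediately before a $1$ or at the end. The mirror rule applies to $111$. Processing each insertion this way makes every step satisfy the required before-$1$/before-$0$ condition without altering any intermediate word. The second stage is a \emph{sort} into increasing location order by adjacent transpositions. The only point to verify is that an adjacent pair in strict inversion can be exchanged: if insertion $i$ is at location $j_i$ and the next is at $j_{i+1}<j_i$, then the latter triple lands strictly to the left of the former, the two triples occupy disjoint blocks, and inserting them in the opposite order with the right-hand location shifted by $3$ reproduces the same word and preserves the before-$1$/before-$0$ condition. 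Termination is the usual decreasing-inversion-count argument, and ties are never swapped, so they cause no trouble here.

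\emph{Uniqueness.} I would argue by induction on $m$, the case $m=0$ being immediate. For the inductive step I claim that the last insertion (the one at the maximal location) is forced by $w$ and $w'$: its triple must be the triple of $w'$ that is (i) \emph{canonically placed}, i.e.\ a $000$ immediately followed by a $1$ or at the end, or a $111$ immediately followed by a $0$ or at the end, (ii) such that its deletion yields a word lying in $I'(w,m-1)$, and (iii) rightmost among all triples meeting (i) and (ii). Granting this, deleting that triple reduces $w'$ to a word admitting a canonical sequence of length $m-1$ from $w$ (the remaining insertions keep their contexts and their order), so the inductive hypothesis pins down the rest and hence the whole sequence. The heart of the step is showing that a canonical sequence cannot end with any other triple: if it ended by inserting a triple strictly to the left of the one selected by (i)--(iii), then the rightmost admissible triple would itself have to be produced by some insertion of the sequence at a location exceeding the maximal one, contradicting monotonicity.

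\emph{Main obstacle.} The difficulty is concentrated in the forcing claim of the uniqueness step, and it has two sources. The first is the \emph{tie} configurations, where a $111$ is inserted immediately before (the $0$ that begins) an inserted $000$, or symmetrically, so that two insertions share one location while their triples appear in the opposite left-to-right order; these must be analyzed directly to confirm that the last insertion is still determined. The second, flagged in the remark following Proposition~\ref{count-I}, is that $w$ need not be reduced, so $w'$ may contain canonically placed triples corresponding to no insertion at all. This is precisely why condition~(ii) refers to reducibility into $I'(w,m-1)$ rather than to the combinatorics of $w'$ alone, and checking that (i)--(iii) isolate a single triple in the presence of such ``spurious'' triples is the step I expect to require the most care.
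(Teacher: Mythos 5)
There is a genuine gap, and it begins with your reading of ``increasing'' as non-decreasing. The lemma requires strictly increasing locations, and the paper's normalization accordingly treats ties as unsorted: its swap applies whenever an insertion at location $j$ is followed by one at location $i \leq j$, replacing the pair by insertions at $i$ and then $j+3$, and a potential function (the sum of locations, bounded by $m(3m+\ell)$) handles termination of both corrections at once. With ties permitted, uniqueness is simply false. Concretely, let $w$ contain $\cdots 01 \cdots$ with the $0$ at position $i$, and let $w'$ contain $\cdots 000\,111\,0\,1\cdots$ there. One admissible sequence inserts $111$ at location $i$ (before the $0$) and then $000$ again at location $i$ (before the leading $1$ of the just-inserted triple): locations $(i,i)$ are non-decreasing and both letter conditions hold. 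But the strictly increasing sequence inserting $000$ at location $i+1$ (before the $1$) and then $111$ at location $i+3$ (before the $0$) also satisfies the conditions and produces the same $w'$. So under your convention the canonical sequence is not unique; the tie configuration you flag in your ``main obstacle'' paragraph is not a delicate case to be checked later, it defeats the statement as you interpret it, and the location map $\Lambda$ built on it downstream would be ill-defined.

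Even granting the strict reading, your uniqueness argument is incomplete where it matters most: the induction hinges on the forcing claim (i)--(iii) for the last insertion, and you explicitly defer its two hard cases (ties, and ``spurious'' canonically placed triples of $w'$ that correspond to no insertion when $w$ is not reduced). The paper needs none of this machinery. It proves uniqueness by a first-difference argument: given two canonical sequences producing the same word, look at the first insertion where they differ. If they insert at locations $i<j$, the sequence acting at $i$ writes at position $i$ the first letter of its triple, which by the before-$1$/before-$0$ condition differs from the letter currently there, while the other sequence never again inserts at a location $\leq i$, so its final word retains the old letter at position $i$ --- contradiction. The only residual case is two different triples inserted at the end of the same $k$-letter intermediate word, where the final words differ at position $k+1$ since all later insertions occur at locations $k+2$ and beyond. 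Note that strict monotonicity is exactly what freezes the prefix and makes this one-step comparison work; it is the ingredient your non-decreasing convention gives up. Your existence argument (local slides, then sorting adjacent strict inversions with the right-hand location shifted by $3$) is essentially the paper's first half and is fine, provided you also swap ties as the paper does.
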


\begin{proof}
Inserting a triple at location $j$ followed by another in location $i$ where $i \leq j$, is equivalent to inserting the latter triple first at location $i$ and the former triple thereafter at location $j+3$. We may perform these swaps as long as there exist such unsorted pairs of adjacent insertions.

Secondly, if a triple $000$ is inserted at location $i$ before $0$, then inserting $000$ at location $i+1$ results in the same word. We keep incrementing locations in the sequence as long as the property stated in the lemma is violated.

Note that both types of corrections strictly increase the sum of locations of insertions in the sequence. Since this sum is bounded, say by $m(3m+\ell)$, the process must terminate with a sequence that satisfies both conditions.

To show uniqueness of the sequence of insertions, consider the first insertion in which two such sequences differ. If the two insertions take place at different locations $i<j$, then the resulting words differ in the $i$th letter, by the condition that $000$ is inserted before $1$ and vice versa. If at the end of a $k$-letter intermediate word a $000$ was inserted in one case and a $111$ in the other, then the two resulting words differ in the $(k+1)$th letter, as further insertions only take place at locations $k+2$ and greater. 
\end{proof}

We continue as outlined at the end of Section~\ref{sec:notation}. Fix $w \in \{0,1\}^{\ell}$ and $m \geq 0$. In order to count words in $I'(w,m)$, we map them to subsets of $[3m+\ell] := \{1,2,\dots,3m+\ell\}$. Denote by $\tbinom{[3m+\ell]}{\leq m}$ the collection of all subsets of $[3m+\ell]$ of size at most $m$. We define the \emph{location map} to be
$$ \Lambda : I'(w,m) \;\to\; \binom{[3m+\ell]}{\leq m}\;.$$
The image $\Lambda(w')$ for any given $w' \in I'(w,m)$ is defined as follows. Consider the canonical sequence of insertions given by  Lemma~\ref{wloglemma}, reconstructing $w'$ from $w$. We record in $\Lambda(w')$ the locations of all insertions in that sequence, except in the case where a $000$ is inserted at the end of the word. Clearly $\Lambda(w') \subseteq [3m+\ell]$ of size at most $m$. 

\begin{lemma}
\label{injective}
For every word $w$ and $m \geq 0$, the map $\Lambda$ is injective.
\end{lemma}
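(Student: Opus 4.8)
The plan is to prove injectivity by showing that the entire canonical insertion sequence of Lemma~\ref{wloglemma} — and hence $w'$ itself — is forced by the data $(w, m, L)$, where $L = \Lambda(w')$. Concretely, I would suppose $w', w'' \in I'(w,m)$ with $w' \neq w''$ and derive $\Lambda(w') \neq \Lambda(w'')$. By Lemma~\ref{wloglemma} each word has a unique canonical sequence, and since such a sequence reconstructs the word when applied to $w$, the inequality $w' \neq w''$ means the two canonical sequences $S', S''$ differ. Let step $t$ be the first index at which they differ, and let $u$ (of length $k = \ell + 3(t-1)$) be the common intermediate word reached after the first $t-1$ identical insertions.

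First I would record a stability observation that turns recorded locations into honest positions in the final word. Because the canonical locations are \emph{strictly} increasing, every insertion after step $s$ occurs at a raw location strictly larger than that of step $s$, and hence inserts its triple strictly to the right of the first letter of the step-$s$ triple, never shifting it. By induction the first letter of the step-$s$ triple keeps its position, so the raw location of step $s$ at insertion time equals the position of that triple's first letter in the final word. Consequently $\Lambda(w')$ is exactly the set of positions in $w'$ at which the recorded triples begin, and the $m - |L|$ unrecorded triples are precisely the $000$-at-end insertions.

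With this in hand I would split into two cases according to how $S'$ and $S''$ first differ at step $t$. If the two step-$t$ insertions occur at different locations, say $a$ in $S'$ and $b$ in $S''$ with $a < b$, then $a < k+1$, so the step-$t$ move of $S'$ is not a $000$-at-end move and is therefore recorded; by the stability observation $a \in \Lambda(w')$. On the other side, every recorded triple of $S''$ begins either at a common-prefix location $< a$ or, from step $t$ onward, at a location $\ge b > a$; hence $a \notin \Lambda(w'')$, and the images differ. If instead the two step-$t$ insertions occur at the same location $a$ but with different types, then $a$ cannot be an interior location: for $a \le k$, Definition~\ref{def:moshe} together with the canonical type rule ($000$ before $1$, $111$ before $0$) forces the type from the letter $u_a$, contradicting that the types differ. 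Hence $a = k+1$, and the two moves are $000$-at-end (unrecorded) in one sequence and $111$-at-end (recorded) in the other; the latter contributes the position $k+1$ to its image while the former contributes nothing at $k+1$, so again $\Lambda(w') \neq \Lambda(w'')$.

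The crux, and the step I expect to need the most care, is the stability observation: it is what makes raw locations recorded in intermediate words of different lengths comparable, and lets me read $\Lambda(w')$ as a set of positions in the fixed-length word $w'$. The second delicate point is to lean on the \emph{strictness} of the increasing locations in Lemma~\ref{wloglemma} — so that later insertions never land on or to the left of an earlier triple's first letter, and so that a location shared by the two sequences genuinely pins down the same insertion point; correctly accounting for the unrecorded $000$-at-end moves in the case analysis also hinges on this strictness. Notably, this argument is self-contained and does not require the reconstruction algorithm, which is needed only later to characterize the image of $\Lambda$.
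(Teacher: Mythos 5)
Your proof is correct, but it runs in the opposite direction from the paper's. The paper argues forward determinism: by induction on locations $i \in \{1,\dots,3m+\ell\}$ it shows that $\Lambda(w')\cap[i]=\Lambda(w'')\cap[i]$ forces the two canonical sequences, and hence the first $i$ letters of the two words, to agree --- in effect, that the location set determines the word --- using the same two-case split you use (an at-end insertion decision when the intermediate word has length $i-1$, an interior decision forced by the $i$th letter otherwise). You instead take the contrapositive: distinct words have distinct canonical sequences by the uniqueness in Lemma~\ref{wloglemma}, you pass to the first differing insertion step $t$, and you exhibit an explicit element of the symmetric difference of the two images. The extra ingredient this requires --- and which you correctly flag as the crux --- is your stability observation that each recorded raw location equals the absolute position of the triple's first letter in the final word, so that $\Lambda(w')$ can be read as a set of positions in $w'$; this does hold, since the strictness of the increasing locations follows from the swap rule in the proof of Lemma~\ref{wloglemma} (a swap is triggered whenever the later location $i$ satisfies $i\le j$), and an insertion at location $j'>j$ leaves position $j$ in the unchanged prefix. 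Your case analysis is complete and matches the paper's dichotomy: different locations $a<b$ force $a\le k$, hence $a\in\Lambda(w')\setminus\Lambda(w'')$; equal interior locations force equal types via the before-$0$/before-$1$ rule, so a type disagreement can only be $000$ versus $111$ at the end, where exactly one sequence records $k+1$. What each approach buys: the paper's induction avoids the stability lemma entirely and proves slightly more (prefix determinism, $\Lambda(w')\cap[i]$ determines the first $i$ letters), which is precisely what makes the later analysis of Algorithm~\ref{alg} smooth; your route yields a sharper structural picture of $\Lambda$ itself --- its value is the set of starting positions of the recorded triples in $w'$ --- and produces a concrete separating witness rather than deducing equality of words from equality of sets.
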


\begin{proof}
Let $w'$ and $w''$ be two words such that $\Lambda(w') \cap [i] = \Lambda(w'') \cap [i]$, where as usual $[i] = \{1,\dots,i\}$. We show by induction on $i$ that up to location $i$ the two words have the same canonical sequence of insertions. By the same argument as in the previous lemma, it would then follow that they agree on the first $i$ letters.

By the induction hypothesis, we assume that the two canonical sequences of insertions agree up to location $i-1$. We perform these insertions to $w$ and obtain some intermediate word. Now there are two cases.

If the intermediate word has length $i-1$, then we can either insert a $000$ or a $111$ at the end of it. Since $i \in \Lambda(w')$ if and only if $i \in \Lambda(w'')$, it is the same insertion in both sequences, by the definition of $\Lambda$ for insertions at the end.

Otherwise, there are at least $i$ letters in the intermediate word. Suppose that the $i$th letter is, say, $0$. Our options at location $i$ are either to insert $111$ or to have no insertion. Again, since $i \in \Lambda(w')$ if and only if  $i \in \Lambda(w'')$, it is the same choice in both words.

Finally, letting $i=3m+\ell$ we obtain the result that if $\Lambda(w') = \Lambda(w'')$, then $w'=w''$.
\end{proof}

Now there is a bijection between $I'(w,m)$ and $\text{Image}(\Lambda) \subseteq \tbinom{[3m+\ell]}{\leq m}$. 

To arrive at the number of subsets $L \subseteq$ $[3m+\ell]$ that have a preimage, we study the inverse function. We describe $\Lambda^{-1}$ as an algorithm that reconstructs a word $w' \in I'(w,m)$ such that $\Lambda(w')=L$. For the purpose of storing letters that are expected to be written, the algorithm uses a \emph{stack} $S$, which is a data structure in which the last element that was pushed in is the first to pop out. It will be convenient to assume that $S$ yields $0$ if one tries to peek at its top or to pop out an element while it is empty.

\begin{samepage}
\begin{algorithm}[Reconstruction of $w'$]
\label{alg}
Input: $w \in \{0,1\}^{\ell}$, $m \in \mathbb{N}$, $L \in \tbinom{[3m+\ell]}{\leq m}$ 
\begin{itemize}
	\item[$\blacktriangleright$]
	for each location $j \in \{\ell,\dots,2,1\}$
	\begin{itemize}
		\item[$\blacktriangleright$]
		push the $j$th letter of $w$ into $S$
	\end{itemize}
	\item[$\blacktriangleright$] 
	for each location $i \in \{1,2,\dots,3m+\ell\}$
	\begin{itemize}
		\item[$\blacktriangleright$] 
		if $i \in L$
		\begin{itemize}
			\item[$\blacktriangleright$]
			peek at the letter at the top $S$
			\item[$\blacktriangleright$]
			push into $S$ three copies of the other letter
		\end{itemize}
		\item[$\blacktriangleright$] 
		pop one letter out of $S$
		\item[$\blacktriangleright$]
		write it down as the $i$th letter of $w'$
	\end{itemize}
	\item[$\blacktriangleright$]
	if $S$ is empty
	\begin{itemize}
		\item[$\blacktriangleright$]
		output $w'$
	\end{itemize}
	\item[$\blacktriangleright$]
	otherwise, report failure
\end{itemize}
\end{algorithm}
\end{samepage}

\begin{table}[b]
\centering
\begin{tabular}{ccc}
\begin{tabular}{|c|c|l|r|}
\hline
$i$ & $\in L$ & word $w'$ & stack $S$ \\ \hline \hline
- & - & & 101 \\ \hline
1 & \checkmark & 0 & 00101 \\ \hline
2 & & 00 & 0101 \\ \hline
3 & & 000 & 101 \\ \hline
4 & & 0001 & 01 \\ \hline
5 & \checkmark & 00011 & 1101 \\ \hline
6 & & 000111 & 101 \\ \hline
7 & & 0001111 & 01 \\ \hline
8 & & 00011110 & 1 \\ \hline
9 & & 000111101 & \\ \hline
\end{tabular}
& &
\begin{tabular}{|c|c|l|r|}
\hline
$i$ & $\in L$ & word $w'$ & stack $S$ \\ \hline \hline
- & - & & 101 \\ \hline
1 & \checkmark & 0 & 00101 \\ \hline
2 & & 00 & 0101 \\ \hline
3 & & 000 & 101 \\ \hline
4 & & 0001 & 01 \\ \hline
5 & & 00010 & 1 \\ \hline
6 & & 000101 & \\ \hline
7 & & 0001010 & \\ \hline
8 & \checkmark & 00010101 & 11 \\ \hline
9 & & 000101011 & 1 \\ \hline
\end{tabular} \\ 
& & \\
$\{1,5\} \in \Lambda[I'(101,2)]$ & & $\{1,8\} \not\in \Lambda[I'(101,2)]$
\\ & &
\end{tabular}
\caption{Algorithm~\ref{alg} illustrated for Examples \ref{ex:recalgo1} and \ref{ex:recalgo2}.}
\label{tab:ex}
\end{table}

\begin{lemma}
\label{correct}
Algorithm~\ref{alg} is correct, that is, if $L \in \text{Image}(\Lambda)$, then $w' \in I'(w,m)$ and $\Lambda(w')=L$.
\end{lemma}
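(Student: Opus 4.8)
The plan is to read Algorithm~\ref{alg} as a stack-based simulation of the canonical insertion sequence supplied by Lemma~\ref{wloglemma}, and to show that when the input is $L=\Lambda(v)$ for some $v\in I'(w,m)$, this simulation regenerates $v$ letter by letter and halts with an empty stack. Since $\Lambda$ is injective by Lemma~\ref{injective}, it suffices to prove that the algorithm's output equals this particular $v$: the two asserted conclusions $w'\in I'(w,m)$ and $\Lambda(w')=L$ then follow immediately from $w'=v$ and $\Lambda(v)=L$. Fix the canonical sequence for $v$, and for each $i\in\{0,\dots,3m+\ell\}$ let $u_i$ denote the word obtained from $w$ by performing every insertion of this sequence whose location is at most $i$, so that $u_0=w$ and $u_{3m+\ell}=v$ (here I use that the location of an insertion, measured in its intermediate word, coincides with the starting position of its triple in $v$, because later insertions lie to the right; this is exactly the increasing-location property of Lemma~\ref{wloglemma}). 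The single invariant I would maintain, by induction on $i$ up to the first moment the stack empties, is that the first $i$ letters written are $v_1\cdots v_i$ and that this output, concatenated with the stack read top to bottom, spells out $u_i$.

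The heart of the argument is the inductive step, which runs parallel to the proof of Lemma~\ref{injective}. The first point to record is the dichotomy that, after position $i$ has been processed, the stack is empty if and only if every remaining insertion is an end-insertion: if the output already equals all of $u_i$, then an insertion at a location exceeding $|u_i|$ is by Definition~\ref{def:moshe} an end-insertion, and by the increasing locations no interior insertion can follow. Consequently, throughout the ``core'' region preceding the first emptying, the stack is nonempty, and its top just before any push is precisely the letter of $u_i$ that the next insertion would precede. By the canonical rule a $000$ is inserted only before a $1$ and a $111$ only before a $0$, so the due triple always consists of three copies of the letter opposite to that top; this is exactly what the algorithm pushes when $i\in L$, while for $i\notin L$ no insertion is due and the algorithm merely pops, advancing one letter. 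Thus peeking, then pushing, then popping reproduces both the insertion decision and the emitted letter, which preserves the invariant.

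The delicate part is the ``trailing'' region, where $u_i$ has been exhausted and the stack sits empty, and this is precisely where the asymmetry in the definition of $\Lambda$ is used. Here the remaining insertions are all end-insertions forming a concatenation of triples, and I would argue triple by triple. A $111$ inserted at the end is recorded in $L$, so at its starting position the algorithm peeks the empty-stack value $0$, pushes three ones, and emits them over the next three pops, returning the stack to empty. A $000$ inserted at the end is, by the very definition of $\Lambda$, \emph{not} recorded in $L$, so the algorithm performs three plain pops; by the convention that popping an empty stack yields $0$, these emit $000$, again leaving the stack empty. Hence each trailing triple is regenerated correctly and the stack returns to empty after it, so after the final position the stack is empty, the algorithm reports success, and the output is $w'=v$.

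I expect the main obstacle to be exactly this reconciliation of the unrecorded $000$-at-end insertions with faithful reconstruction. One must check that the empty-stack-yields-$0$ convention fires only in the trailing region and never prematurely in the interior, which is the purpose of the dichotomy ``stack empty $\iff$ all remaining insertions are end-insertions,'' and that the recording convention of $\Lambda$---excluding $000$ at the end but keeping $111$ at the end---dovetails exactly with the rule ``peek $0$, push the opposite triple,'' so that both recorded and unrecorded end-triples are emitted in their correct positions. Once these two points are pinned down, everything else is routine bookkeeping on the invariant.
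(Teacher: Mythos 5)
Your overall strategy---simulating the canonical insertion sequence of Lemma~\ref{wloglemma} for the preimage $v$ with $\Lambda(v)=L$, while maintaining the invariant that the written prefix plus the stack contents spells the current intermediate word---is the same as the paper's, and it works up to the first moment the stack empties. The genuine gap is in your trailing region, and it starts with the dichotomy ``the stack is empty after step $i$ if and only if every remaining insertion is an end-insertion.'' This is false: an end-insertion lengthens the intermediate word, so a later insertion, even at a strictly larger location, can be \emph{interior} to the just-inserted triple; increasing locations do not rule this out, and your appeal to Definition~\ref{def:moshe} compares the later location to $\lvert u_i\rvert$ rather than to the length of the (longer) word it is actually inserted into. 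Concretely, take $w=1$, $m=2$, $v=1011100\in I'(1,2)$. Its canonical sequence inserts $000$ at the end of $1$ (location $2$) and then $111$ \emph{before a $0$} at location $3$, inside the span of that $000$; hence $\Lambda(v)=\{3\}$, the end-inserted $000$ being unrecorded. After step $1$ the stack is empty, yet the remaining insertion at location $3$ is not an end-insertion. Your triple-by-triple account of the trailing region therefore fails as well: the letters of the unrecorded $000$ end up at positions $2,6,7$ of $v$, not as three consecutive plain pops, and the recorded trailing insertion is not ``$111$ at the end.'' Indeed, parsing $L=\{3\}$ by your scheme---trailing output a concatenation of complete triples in insertion order---would yield $1000111$, whose location set is $\{5\}$, not $\{3\}$; so your argument cannot certify this legitimate element of $\mathrm{Image}(\Lambda)$. (Note that Algorithm~\ref{alg} itself does reconstruct $v$ correctly here: at step $3$ it peeks the empty stack, reads $0$, and pushes $111$, which is exactly the interior insertion before a pending $0$.)

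The repair is the one the paper's proof uses: do not split off a trailing region at all, but restore your invariant by treating every pop from an empty stack as an implicit insertion of an unrecorded $000$ at the end---equivalently, ``push $000$ whenever one tries to pop out of an empty stack.'' With that reading, the stack conceptually still holds the pending $0$s of each unrecorded trailing triple, so the empty-stack conventions (peek and pop both yield $0$) are precisely what extend your interior-case analysis uniformly to all steps: a subsequent $i\in L$ peeks $0$ and correctly pushes $111$ before one of those pending $0$s, exactly as the canonical rule demands, and plain pops emit the remaining $0$s in their correct, possibly non-contiguous, positions. Your inductive invariant then holds for all $i\le 3m+\ell$ with no separate case analysis, giving $w'=v$ and hence the lemma. (A side remark: your appeal to Lemma~\ref{injective} is unnecessary---once the output equals $v$, the conclusions $w'\in I'(w,m)$ and $\Lambda(w')=L$ are immediate from $\Lambda(v)=L$.)
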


\begin{proof}
One can observe that the algorithm imitates the process of inserting triples into $w$ at locations $L$. As long as there are no insertions, we write down $w$. Also, at each insertion, we give priority to the inserted triple and defer letters of $w$ and other unfinished triples for later. At each step, the written part of $w'$, together with the part that waits in the stack, form an intermediate word that includes the insertions performed so far.

We decide whether to insert a triple $000$ or $111$ by the condition that it is different from the next letter. If the stack empties before the location $i$ reaches the end of $w'$, then we have written all we could and some triple must be inserted. By default we assume it to be $000$ unless $i \in L$, in which case we insert $111$. This arbitrary choice suits the definition of $\Lambda$ that discards insertions of $000$ at the end. It also complies with the convention that an empty stack pops $0$. Note that the number of such steps is clearly divisible by three. In other words, an equivalent algorithm could push $000$ whenever one tries to pop out of an empty stack.

We check that the simulated sequence of insertions is canonical as in Lemma~\ref{wloglemma}. Since these insertions happen at location $i$ whenever $i \in L$, and since $i$ is increasing, so is the order of the insertions. Except possibly for insertions at the end, each inserted triple is different from the letter at the top of the stack, which is indeed the next letter to be written at that point. 

In conclusion, the word $w'$ returned by the algorithm is obtained from $w$ by a canonical sequence of insertions at the locations in $L$ together with some insertions of $000$ at the end, exactly as in the definition of $\Lambda$.
\end{proof}

It is clear from the proof of Lemma~\ref{correct} that every $L \in \tbinom{[3m+\ell]}{\leq m}$ for which the algorithm does not fail comes from some legitimate word $w' \in I'(w,m)$. Towards analyzing the image of the location map~$\Lambda$, we thus see that it is enough to characterise those subsets for which the algorithm fails. To illustrate, we provide examples for both cases.

\begin{example}
\label{ex:recalgo1}
Let $w=101$, $m=2$, and $L=\{1,5\}$. Then the reconstruction algorithm produces a preimage $w' \in I'(101,2)$ as shown in Table \ref{tab:ex} on the left, and indeed $L = \Lambda(000111101)$. 
\end{example}
\begin{example}
\label{ex:recalgo2}
However, $L=\{1,8\} \not \in \text{Image}(\Lambda)$ for $I'(101,2)$ as the stack is not empty at the end of the algorithm, as shown in Table \ref{tab:ex} on the right.
\end{example}

From Examples \ref{ex:recalgo1} and \ref{ex:recalgo2}, we see what might prevent a subset $L$ from having a preimage under $\Lambda$. Loosely speaking, the stack might not be empty after step $3m+\ell$ if too many elements of $L$ are too close to the end of the interval $[3m+\ell]$. Using the following classical result from enumerative combinatorics, we make this condition precise in the subsequent proof.

\smallskip \noindent {\bf Bertrand's Ballot Theorem.} (e.g.~\cite{ballot}, pp. 350-351) {\it 
Suppose that in an election, candidate $A$ receives $a$ votes and candidate $B$ receives $b$ votes, where $b \geq ra$ for some positive integer $r$. \nopagebreak \\ Then the number of ways the ballots can be ordered so that $B$ maintains at least $r$ times as many votes as $A$ throughout the counting of the ballots is given by $\frac{b+1-ra}{b+1}\binom{a+b}{a}$.
} \smallskip

\begin{proof}[Proof of Proposition~\ref{count-I}]
By the injectivity in Lemma~\ref{injective}, it is enough to count $\text{Image}(\Lambda)$ to arrive at $|I'(w,m)|$. For this, we presently characterise the subsets $L \in \tbinom{[3m+\ell]}{\leq m}$ on which Algorithm~\ref{alg} fails and those on which it does not.

At each step $i \in L$, the stack size increases by $2$, while $i \not\in L$ causes the size to decrease by $1$. Therefore, if for some suffix of $[3m+\ell]$, the ratio between elements in $L$ and elements not in $L$ is more than $1$:$2$, then the stack becomes larger over that interval and cannot end up empty when Algorithm~\ref{alg} terminates. 

Conversely, suppose that the algorithm failed as the stack did not end up empty. The total number of letters pushed into the stack equals $\ell+3|L| \leq \ell+3m$, and at each of the $\ell+3m$ steps we tried to pop a letter out. Therefore, there must have been some intermediate step at which a letter could not pop out as the stack was empty. This means that the stack became larger over some suffix of $[3m+\ell]$ in which the ratio of elements in $L$ to those not in $L$ was more than $1$:$2$.

In conclusion, $|\text{Image}(\Lambda)|$ is the number of subsets $L$ of $[3m+\ell]$ of size at most $m$ such that in every suffix of $[3m+\ell]$, the number of elements not in $L$ is at least twice as much as those in $L$. 

We allude to the Bertrand's Ballot Theorem to count these subsets, finding analogous quantities in the problem at hand. Here the numbers $(3m+\ell,\dots,2,1)$ are ``voting'' for (A) being in $L$, or (B) not being in $L$. Thus the number of votes are $a=|L|$ and $b=3m+\ell-|L|$, and the factor by which candidate $B$ should lead throughout the counting is $r=2$. For each $|L| \in \{0,1,\dots,m\}$, the number of feasible location sets $L$ of size $|L|$ is therefore equal to
\begin{equation}
 \frac{3m+\ell-3|L|+1}{3m+\ell-|L|+1}\binom{3m+\ell}{|L|}\;=\;\binom{3m+\ell}{|L|}-2\binom{3m+\ell}{|L|-1}\;. 
 \label{eq:needlater}
 \end{equation}
Summing the above over $|L| \in \{0,1,\dots,m\}$, we obtain 
$$ |\text{Image}(\Lambda)| \;=\; \binom{3m+\ell}{\leq m}-2\binom{3m+\ell}{\leq m-1}\;=\;\binom{3m+\ell}{m}-\binom{3m+\ell}{< m} $$
as required. 
\end{proof}

\subsection{Counting external insertions}
\label{sec:external}

We next extend the counting to the insertion set $I(w,m)$, which comprises of all words that are obtained from a reduced word $w$ by $m$ insertions, both internal and external.  Before embarking on this venture, we make a definition analogous to Definition \ref{def:internal}.

\begin{definition}
\label{def:external}
An \emph{external insertion} takes a $k$-letter word to a $(k+3)$-letter word, either by adding a prefix $001$ or $110$, or by adding a suffix $011$ or $100$.

Given $w \in \{0,1\}^{\ell}$ and $m \geq 0$, the \emph{external insertion set} $I''(w,m)$ is the set of all words in $\{0,1\}^{3m+\ell}$ obtained from $w$ by $m$ external insertions.
\end{definition}

As explained in Section \ref{sec:notation}, any word $w'' \in I''(w,m)$ represents the same two-bridge knot as~$w$. The following lemma enables us to insert triples in two separate stages: first all of the external insertions followed by all of the internal insertions.

\begin{lemma}
\label{wloglemma2}[in the spirit of~\cite{CoKr}, p. 16]
Let $w' \in I(w,m)$ where $w \in \{0,1\}^{\ell}$. Then there exists some $e \in \{0,\dots,m\}$ and intermediate word $w'' \in \{0,1\}^{\ell+3e}$ such that $w'' \in I''(w,e)$ and \mbox{$w' \in I'(w'',m-e)$}. 

Moreover, there exists such $w''$ of the form $w''=p^iws^j$, with a prefix \mbox{$p \in \{001,110\}$} and a suffix $s \in \{011,100\}$, where $i+j=e$. 

Furthermore, if $w$ is reduced, then this representation is unique.
\end{lemma}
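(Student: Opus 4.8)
The plan is to prove the three assertions in order: the separation of external from internal insertions, the existence of the normal form $p^iws^j$, and its uniqueness when $w$ is reduced. Throughout I work with the sequence of insertions building $w'$ from $w$, and I freely use that internal insertions are recorded at increasing locations as in Lemma~\ref{wloglemma}.

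\emph{Separation (first paragraph of the statement).} My main device is a bubbling argument. I claim that in any insertion sequence, an internal insertion immediately followed by an external one can be swapped into the order (external, internal) without changing the resulting word. Indeed, if an external prefix $p\in\{001,110\}$ is prepended after an internal triple was inserted at location $t$, then prepending $p$ first and inserting the same triple at location $t+3$ yields the identical word (Definitions~\ref{def:internal} and~\ref{def:external}); the external-suffix case is analogous, the internal location being unchanged. Each such swap strictly moves an external insertion earlier, so iterating it (a stable bubble sort) terminates with all $e$ external insertions performed before all internal ones and in their original relative order, giving $w''\in I''(w,e)$ with $w'\in I'(w'',m-e)$. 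Since prefix and suffix external insertions act on opposite ends they commute, so I may group all prefixes before all suffixes, obtaining $w''=q_a\cdots q_1\,w\,t_1\cdots t_b$ with $q_r\in\{001,110\}$, $t_r\in\{011,100\}$, and $a+b=e$.

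\emph{Normal form (the ``moreover'' clause).} The key observation is that two \emph{distinct} adjacent external prefix triples create an internal triple: $110\,001=11\,000\,1$ and $001\,110=00\,111\,0$, and symmetrically $100\,011$ and $011\,100$ for suffixes. I induct on the number $a$ of prefix insertions. If all $q_r$ are equal we take $p=q_1$, $i=a$. Otherwise pick adjacent $q_{k+1}\ne q_k$ and set $Y=q_{k-1}\cdots q_1\,w\,t_1\cdots t_b$; then $q_{k+1}q_kY$ contains an internal triple, so it lies in $I'(Y,2)$, i.e.\ it is reachable from $Y$ by two \emph{internal} insertions. Re-bubbling the outer prefixes $q_{k+2},\dots,q_a$ back in front of these two internal insertions (again by the prepend/internal commutation above) exhibits the same $w''$ with external prefix stack shortened to $q_a\cdots q_{k+2}q_{k-1}\cdots q_1$ and two extra internal insertions. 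This strictly decreases the number of external prefix insertions, so the induction terminates with a stack all of whose triples coincide, namely $p^i$. The suffix is handled identically, and since the two normalizations touch disjoint ends they combine, composing the internal insertions, to give $w''=p^iws^j$ with $w'\in I'(w'',m-i-j)$.

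\emph{Uniqueness.} When $w$ is reduced I first check that $w''=p^iws^j$ is reduced with respect to internal moves (Definition~\ref{def:reduced}): the blocks $p^i$, $s^j$, and $w$ contain no $000$ or $111$, and the junctions $p\cdot w$, $w\cdot s$ introduce none because $w$ begins and ends with $01$ or $10$. Next, the rewriting system deleting $000$ or $111$ is length-reducing, and its only critical overlaps are between two equal triples (a $000$ cannot overlap a $111$), which resolve; hence it is confluent and each word has a unique internally-reduced normal form. Since $w'\in I'(w'',\cdot)$ means $w''$ is obtained from $w'$ by internal reductions and $w''$ is itself internally reduced, $w''$ must be that unique normal form of $w'$, so $w''$ is determined. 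Finally the parsing of the fixed string $w''$ as $p^iws^j$ is forced: $p$ (for $i\ge1$) is read off the first two letters, and the reduced run-structure of $w$ forbids the periodic overlaps that would let $w$ begin at two different positions.

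I expect the normalization step of the second paragraph to be the main obstacle: the bookkeeping must ensure that, after reinterpreting a mismatched pair as internal insertions, the remaining external insertions really can be bubbled back to the front and that the induction parameter genuinely decreases. A secondary subtlety is the last sentence on uniqueness, namely ruling out alternative parsings of $w''$, which needs a short combinatorics-on-words argument using that $w$ is reduced.
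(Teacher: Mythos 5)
Your proof is correct and follows essentially the same route as the paper: sort external insertions before internal ones by swapping adjacent pairs, merge adjacent unequal prefixes (or suffixes) into internal insertions via identities like $(001)(110)=(00(111)0)$, and obtain uniqueness from the observation that $p^iws^j$ is reduced with respect to internal moves---hence it is determined by $w'$---after which the prefixes and suffixes are read off using that $w$ starts and ends with $01$ or $10$. Your explicit confluence check for the rewriting system $\{000\to\varepsilon,\,111\to\varepsilon\}$ and the re-bubbling bookkeeping in the normalization step simply spell out details the paper's proof leaves implicit.
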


\begin{proof}
Observe that whenever an external insertion follows an internal one, the order of the insertions can be easily swapped. Hence we can rearrange the sequence such that all external insertions happen first.  

For the second property, note that two consecutive different prefixes are equivalent to two internal insertions, as in $(001)(110) = (00(111)0)$. Thus as long as there are adjacent unequal external insertions, we replace them by internal ones. Eventually, all prefixes would be the same and similarly for suffixes. This yields $w''=p^iws^j$ with $i+j=e$ external insertions, as required.

As for uniqueness, note that $w''$ is reduced with respect to internal moves, as it is only composed of one- and two-letter runs from $\{0,00\}$ and $\{1,11\}$ alternately. Indeed, this follows from $p^i$, $w$, and $s^j$ having this property, while only one-letter runs occur at the ends where they are concatenated. It follows that $w''$ is uniquely determined by~$w'$. Now, since $w$ starts and ends with $01$ and $10$, it is easy to read off all prefixes and suffixes from~$w''$.
\end{proof}

\begin{remark}
\label{smalll}
The four short words $\{0, 1, 00, 11\}$ are not reduced by our definition, since they do not start and end with $01$ or $10$. Indeed, some of the uniqueness arguments do not hold for them. For example, $0011$ reduces both to $0$ and $1$, and $00100$ reduces to $00$ in two different ways. One may verify though that Lemma~\ref{wloglemma2} does hold for the empty word with $\ell=0$ letters. 
\end{remark}

We are now in a position to derive an explicit expression for $|I(w,m)|$.

\begin{lemma}
\label{count-II}
Let $w \in \{0,1\}^{\ell}$ be reduced, and let $m \geq 0$. Then
$$ |I(w,m)|\;=\;\tfrac{m^2+(\ell+5)m+2}{2}\binom{3m+\ell}{m}\;-\;\tfrac{m^2+(2\ell+9)m+(\ell^2+7\ell+2)}{2}\binom{3m+\ell}{<m}\;. $$
\end{lemma}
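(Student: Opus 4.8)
The plan is to combine the two-stage decomposition of Lemma~\ref{wloglemma2} with the internal count of Proposition~\ref{count-I}, and then collapse the resulting weighted sum of binomials into the claimed closed form.

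First I would use Lemma~\ref{wloglemma2} to write $I(w,m)$ as a disjoint union over intermediate words. Since $w$ is reduced, every $w' \in I(w,m)$ determines a unique number $e \in \{0,\dots,m\}$ of external insertions and a unique intermediate word $w'' = p^i w s^j$ with $i+j = e$, after which $w'$ is produced from $w''$ by $m-e$ internal insertions. Uniqueness of $w''$ makes the sets $I'(w'',m-e)$ pairwise disjoint, so $|I(w,m)| = \sum_{w''} |I'(w'', m-e(w''))|$. Two observations turn this into a single sum over $e$. The length of $w''$ is $\ell+3e$, so Proposition~\ref{count-I} gives $|I'(w'',m-e)| = \binom{3(m-e)+(\ell+3e)}{m-e} - \binom{3(m-e)+(\ell+3e)}{<m-e} = \binom{3m+\ell}{m-e} - \binom{3m+\ell}{<m-e}$, which is independent of $e$; write $n:=3m+\ell$. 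Next I would count the number $N(e)$ of distinct $w''=p^iws^j$ with $i+j=e$: since $p\in\{001,110\}$ and $s\in\{011,100\}$ each contribute a factor $2$ exactly when the corresponding exponent is positive, and distinct tuples $(i,j,p,s)$ give distinct $w''$ by the uniqueness in Lemma~\ref{wloglemma2}, a short case count yields $N(0)=1$ and $N(e)=4e$ for $e\ge1$. Hence
\[ |I(w,m)| \;=\; \sum_{e=0}^{m} N(e)\left[\binom{n}{m-e}-\binom{n}{<m-e}\right]. \]

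The remaining, and main, task is the binomial evaluation of this sum. I would reindex by $k=m-e$ and abbreviate $b_k := \binom{n}{k}-\binom{n}{<k}$. Using equation~\eqref{eq:needlater}, each $b_k$ is the partial sum $\sum_{j=0}^{k}c_j$ of the ballot counts $c_j = \binom{n}{j}-2\binom{n}{j-1}$, so Abel summation converts $\sum_{k}N(m-k)\,b_k$ into $\sum_{j=0}^{m}c_j\,W_j$ with the triangular weight $W_j = 1+2(m-j)(m-j+1)$. Splitting off the constant part recovers $b_m$, and substituting $c_j = \binom{n}{j}-2\binom{n}{j-1}$ into the quadratic part and shifting indices telescopes it to $\sum_{j=0}^{m-1}\binom{n}{j}\bigl(3(m-j)-(m-j)^2\bigr)$. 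Finally I would reduce these polynomially-weighted partial sums to $\binom{n}{m}$ and $\binom{n}{<m}$ via the absorption identity $j\binom{n}{j}=n\binom{n-1}{j-1}$ together with Pascal's rule to re-express the resulting $\binom{n-1}{\le\cdot}$ partial sums in terms of $\binom{n}{\cdot}$ ones.

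I expect this last reduction to be the main obstacle: it is elementary but demands careful bookkeeping, since the weights $(m-j)$ and $(m-j)^2$ generate several partial sums whose cross-terms must cancel so as to leave precisely $\binom{n}{m}$ and $\binom{n}{<m}$ with the stated quadratic coefficients in $m$ and $\ell$. As a safeguard against algebraic slips I would first verify the target formula directly at the small values $m=0$ (where it gives $1$) and $m=1$ (where both the sum above and the closed form give $\ell+6$) before trusting the general manipulation.
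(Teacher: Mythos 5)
Your proposal is correct and follows essentially the same route as the paper: the same disjoint decomposition via Lemma~\ref{wloglemma2} with the counts $N(0)=1$ and $N(e)=4e$, the same application of Proposition~\ref{count-I} (noting the top index stays $3m+\ell$), and then an evaluation of the same weighted binomial sum --- where the paper simply quotes two induction-verified identities for $\sum_{k<m}k\binom{n}{k}$ and $\sum_{k<m}k(k-1)\binom{n}{k}$, you reach the equivalent reduction by Abel summation against the ballot counts $c_j$ of~\eqref{eq:needlater} plus absorption and Pascal. One small slip to note: the telescoped quadratic part should carry an extra factor of $2$, since $g(j)-2g(j+1)=2\bigl(3(m-j)-(m-j)^2\bigr)$ for $g(j)=2(m-j)(m-j+1)$; with that correction your expression matches the paper's intermediate form $\binom{n}{m}-\sum_{e=1}^{m}\left(2e^2-6e+1\right)\binom{n}{m-e}$, and your own $m=1$ sanity check (total $\ell+6$) would have caught the discrepancy.
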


\begin{proof}
For $w' \in I(w,m)$, let $w'' = p^i w s^j$ as in Lemma~\ref{wloglemma2}. Thus $I'(w'',m-e)$ is the set of all the words that are obtained from $w$ through $w''$. By the uniqueness of $w''$, the sum of $|I'(w'',m-e)|$ over such $w''$ yields $|I(w,m)|$.

We determine how many such intermediate words $w''$ exist for each $e$. If $e=0$ then $w''=w$, giving just one option. Otherwise there are $4e$ possibilites, as follows: four options for choosing $p$ and~$s$ for each of the $e-1$ partitions $e=i+j$ where $i,j>0$, and two options for each of the extremal cases $i=0$ or $j=0$. Summing the expression from Proposition~\ref{count-I} over all $w''$ yields 
\begin{align*}
\left|I(w,m)\right| \;&=\; \binom{3m+\ell}{m} - \binom{3m+\ell}{<m} + \sum\limits_{e=1}^m 4e\left[\binom{3m+\ell}{m-e} - \binom{3m+\ell}{<m-e}\right] \\
\;&=\; \binom{3m+\ell}{m} -
\sum\limits_{e=1}^m \left(2e^2-6e+1\right) \binom{3m+\ell}{m-e}.
\end{align*}
With $n=3m+\ell$, we change the variable in the summation to $k=m-e$ and then simplify it via the following two identities:
\begin{align*}
&\sum\limits_{k=0}^{m-1}k\binom{n}{k} \;=\; \frac{n}{2} \binom{n}{<m} \;-\; \frac{m}{2} \binom{n}{m} \\
&\sum\limits_{k=0}^{m-1}k(k-1)\binom{n}{k} \;=\; \frac{n(n-1)}{4} \binom{n}{<m} \;-\; \frac{m(2m+n-3)}{4} \binom{n}{m},
\end{align*}
which are easily verified by induction.
\end{proof}

\section{The knot probability function}
\label{sec:mainthm}

At this stage, we can derive precise and asymptotic expressions for $P\left[K_n  = K\right]$, the probability that a given knot $K$ of bridge number at most two occurs in the random model. The new closed form expression replaces the recursive solution derived in \cite{CoKr}.

\begin{theorem}
\label{thm:main}
For every two-bridge knot $K$ and $i \in \{0,1\}$ there exist two nonnegative integers, $r = r(K) \in \{1,2,4\}$ and $\ell = \ell_i(K) \equiv i$ (mod $3$), such that the probability of $K$ is
$$ P\left[K_n  = K\right] \;=\; \frac{r}{2^n} \cdot F\left(\frac{n-\ell}{3},\;\ell \right), \;\;\;\;\;\;\;\;\;\;\; n \in \{\ell, \ell+3, \ell+6, \dots \}, $$
where
$$ F(m,\ell) \;:=\; \tfrac{m^2+(\ell+5)m+2}{2} \binom{3m+\ell}{m} - \tfrac{m^2 + (2\ell+9)m + (\ell^2+7\ell+2)}{2} \binom{3m+\ell}{<m}. $$

The probability that $K_n$ is the unknot is $ F\left(\left\lfloor n/3 \right\rfloor,0\right) / 2^{3\left\lfloor n/3 \right\rfloor}$.
\end{theorem}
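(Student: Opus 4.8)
The plan is to compute $P[K_n=K]$ as $N/2^n$, where $N$ is the number of words in $\{0,1\}^n$ representing $K$, and to count these words by sorting them according to the reduced word to which they reduce. Fix $n$ and set $i$ with $i\equiv n\pmod 3$. Each reduction move deletes three letters and preserves both the knot type and the residue of the length modulo $3$, so any word representing $K$ reduces to a reduced word of $K$ whose length is $\equiv i$, namely $\ell=\ell_i(K)$. Invoking the uniqueness of the reduced form up to the three symmetries of Remark~\ref{property:symmetry} (via \cite[Theorem 2.19 and Lemma 2.20]{CoKr}), there are exactly $r=r(K)$ such reduced words $w$, and every word representing $K$ reduces to precisely one of them. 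Since the internal and external insertions of Definitions~\ref{def:internal} and~\ref{def:external} are the exact inverses of the reduction moves, the words of length $n$ whose reduced form is a fixed reduced $w$ are precisely those in $I(w,m)$ with $m=(n-\ell)/3$.

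Next I would assemble the disjoint decomposition. Confluence of the reduction to a \emph{unique} reduced form guarantees that $I(w,m)\cap I(w',m)=\varnothing$ for distinct reduced words $w,w'$, as any word in the intersection would reduce to both. Hence the length-$n$ words representing $K$ form the disjoint union $\bigsqcup_w I(w,m)$ over the $r$ reduced words of length $\ell$. Applying Lemma~\ref{count-II}, which yields $|I(w,m)|=F(m,\ell)$ for every reduced $w$ of length $\ell$, and summing over the $r$ reduced words gives $N=r\cdot F((n-\ell)/3,\ell)$; dividing by $2^n$ produces the stated formula. The restriction $n\in\{\ell,\ell+3,\dots\}$ simply records that $N=0$ unless $n\equiv i\pmod 3$ and $n\ge\ell$, so that $m=(n-\ell)/3$ is a nonnegative integer.

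The main obstacle is the unknot, which is precisely the degenerate case where this clean theory breaks down: its reduced form is the empty word, of length $0$, so it has a residue-$0$ reduced word but no proper residue-$1$ reduced word, and moreover, as in Remark~\ref{smalll}, it is also represented by the non-reduced dead-ends $0,1,00,11$, on which the unique-reduced-form property fails (for instance $0011$ reduces both to $0$ and to $1$). For $n\equiv 0\pmod 3$ the argument above runs verbatim with $\ell=0$ and $r=1$: the only residue-$0$ trivial dead-end is the empty word, so the residue-$0$ unknot words are exactly $I(\varnothing,n/3)$, and Lemma~\ref{count-II} applied to the empty word (legitimate by Remark~\ref{smalll}) gives $F(n/3,0)$ of them, hence probability $F(n/3,0)/2^n$. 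For $n=3m+1$ the residue-$1$ unknot words are those reducing to $0$ or to $1$, that is $I(0,m)\cup I(1,m)$; by inclusion--exclusion and the complement symmetry this count equals $2\,|I(0,m)|-|I(0,m)\cap I(1,m)|$.

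The delicate part, and where I expect the real work to lie, is evaluating these two quantities for the non-reduced seeds $0$ and $1$, since the uniqueness in Lemma~\ref{wloglemma2} and hence Lemma~\ref{count-II} no longer applies directly. I would first extend the insertion count to the single-letter seed and then identify $I(0,m)\cap I(1,m)$ with the words admitting reductions to both $0$ and $1$, aiming to show that the failure of confluence on the short words contributes exactly the overcount needed so that $2\,|I(0,m)|-|I(0,m)\cap I(1,m)|=2F(m,0)$; the small case $m=1$ (where $|I(0,1)|=7$, the overlap is $2$, and $2\cdot 7-2=12=2F(1,0)$) confirms the target. Finally, using that $2^{3m}=2^{n-1}$ when $n=3m+1$, both residues package into the single expression $F(\lfloor n/3\rfloor,0)/2^{3\lfloor n/3\rfloor}$, completing the proof. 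Controlling this overcounting from the non-reduced representatives is the crux; the rest is the bijective bookkeeping of the first two paragraphs together with the already-established Lemma~\ref{count-II}.
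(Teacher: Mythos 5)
For the generic case ($K$ a nontrivial two-bridge knot, and the unknot with $n\equiv 0$ mod $3$) your argument is correct and is essentially the paper's proof: partition the $n$-letter words representing $K$ according to the unique reduced word they reduce to, use the multiplicity $r(K)$ from the symmetry discussion, and apply Lemma~\ref{count-II} to get $r\cdot F\bigl(\frac{n-\ell}{3},\ell\bigr)$ words. Your identification of the residue-$1$ unknot words with $I(0,m)\cup I(1,m)$ is also sound, though it silently uses a small fact worth stating: a word of length $3m+1$ representing the unknot cannot get stuck at any terminal word other than $0$ or $1$, since any other word on which no reduction move applies is reduced with at least three runs and hence, by Proposition~\ref{cross}, represents a knot of crossing number at least three (the dead ends $00$, $11$ are excluded by the length residue).

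The genuine gap is in the unknot case $n=3m+1$: you reduce the theorem to the identity $2\,|I(0,m)|-|I(0,m)\cap I(1,m)|=2F(m,0)$, but you never prove it --- you only verify $m=1$ and announce a plan (``I would first extend the insertion count\dots aiming to show\dots''). Carrying this out is precisely the nontrivial content of this case: one must extend Lemma~\ref{wloglemma2} to the non-reduced seeds $0$ and $1$, show that the intermediate word $w''=p^iws^j$ is still uniquely determined by $w'$ within each seed, and count the cross-seed coincidences such as $0\cdot(011)=(001)\cdot 1$, which are exactly the words lying in both insertion sets; none of this is done in your proposal. The paper closes the case differently and more economically: rather than computing $|I(0,m)|$ and the intersection separately, it reruns the proof of Lemma~\ref{count-II} with the multiplicity of intermediate words at level $e$ changed from $4e$ to $2e+1$ (the nonuniqueness of Remark~\ref{smalll} nets out the shared intermediates, since $2\cdot(2e+1)=2\cdot 4e-(4e-2)$ matches your inclusion--exclusion level by level), and then verifies via Pascal's rule that the modified expression with $\ell=1$ equals $F(m,0)$. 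Your inclusion--exclusion route would work and is equivalent in substance, but as written it is a program with its pivotal identity unestablished, not a proof.
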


\begin{proof}
As explained in Section~\ref{sec:notation}, each two-bridge knot $K$ can be represented by reduced words of two possible lengths, $\ell_0(K)$ and $\ell_1(K)$, such that $\ell_i(K) \equiv i$~mod~$3$. Also recall that the number of reduced representations of the knot $K$ of length $\ell = \ell_i(K)$ is some integer $r = r(K) \in \{1,2,4\}$.

If $w \in \{0,1\}^{\ell}$ is a reduced word representation of the knot $K$, then there are $|I(w,m)|$ words of length $n = \ell + 3m$ that reduce to it. Since these are the reduced representations, any word representing a particular knot $K$ reduces to exactly one of them. Thus the total number of words representing the knot $K$ equals $r|I(w,m)|$. Dividing this by $2^n$, the number of all possible binary words of length $n$, we obtain the desired probability, since all the $2^n$ words were assumed to be equally likely. Substituting $|I(w,m)| = F(m,\ell)$ from Lemma~\ref{count-II} yields the expression stated in the theorem.

We are left with the case of the unknot, whose reduced lengths are $\ell_0=0$ and $\ell_1=1$. In view of Remark~\ref{smalll}, if $n \equiv 0$~mod~$3$ then the probability of the unknot is given by the same expression just derived above with $r=1$ and~$w$ the empty word. The case $n\equiv 1$~mod~$3$ can be derived by some modifications to Lemma~\ref{count-II}.  Specifically, $4e$ should be replaced with $2e+1$ due to nonuniqueness as mentioned in Remark \ref{smalll} following Lemma~\ref{wloglemma2}. One can verify via Pascal's rule that the new expression with $\ell=1$ is equal to the original one with $\ell=0$.
\end{proof}

We now turn to the asymptotic rate at which the knot probabilities decay to zero as $n \to \infty$.

\begin{corollary}
\label{prop:main}
For every two-bridge knot $K$,
$$ P\left[K_n  = K\right] \;\;=\;\; \alpha ^ {\displaystyle(1 + o(1))n}, $$
where $\alpha = \sqrt[3]{\frac{27}{32}} \approx 0.945$.
\end{corollary}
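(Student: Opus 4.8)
The plan is to read off the exponential rate directly from the closed form in Theorem~\ref{thm:main}. Writing $n = \ell + 3m$ with $\ell = \ell_i(K)$ a fixed constant depending only on $K$ and $m = (n-\ell)/3 \to \infty$, we have $P[K_n = K] = r\,2^{-n}F(m,\ell)$ with $r \in \{1,2,4\}$ bounded. Taking logarithms, the bounded factor $r$ and the fixed shift $\ell$ contribute only $O(1)$, so the whole problem reduces to controlling $\log_2 F(m,\ell)$ to within an additive $o(n)$ term. My first step is therefore to show that $F(m,\ell)$ grows, up to polynomial factors in $m$, at the same rate as the binomial coefficient $\binom{3m+\ell}{m}$.

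To pin down that growth I would sandwich $F$. Set $N := 3m+\ell$. For the upper bound I simply discard the subtracted term in $F(m,\ell) = A(m)\binom{N}{m} - B(m)\binom{N}{<m}$, where $A(m) = \tfrac{m^2+(\ell+5)m+2}{2}$, giving $F(m,\ell) \le A(m)\binom{N}{m} = m^{O(1)}\binom{N}{m}$. For the lower bound I use $F(m,\ell) = |I(w,m)| \ge |I'(w,m)| = \binom{N}{m} - \binom{N}{<m}$ from Proposition~\ref{count-I}, and estimate the partial sum: since the backward ratios $\binom{N}{k-1}/\binom{N}{k} = k/(N-k+1)$ stay below $\tfrac12$ for all $k \le m-1$ (equivalently $3k < N+1$), a geometric comparison yields $\binom{N}{<m} < 2\binom{N}{m-1}$, whence $|I'(w,m)| > \binom{N}{m} - 2\binom{N}{m-1} = \tfrac{\ell+1}{2m+\ell+1}\binom{N}{m}$. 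Both bounds together give $F(m,\ell) = m^{O(1)}\binom{N}{m}$, so $\log_2 F(m,\ell) = \log_2\binom{3m+\ell}{m} + O(\log m)$.

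Next I apply Stirling's formula. Because $\ell$ is constant, $\binom{3m+\ell}{m}$ and $\binom{3m}{m}$ differ only by a factor polynomial in $m$, and $\log_2\binom{3m}{m} = 3m\,H(1/3) + O(\log m)$, where $H(x) = -x\log_2 x - (1-x)\log_2(1-x)$ is the binary entropy, with $H(1/3) = \log_2 3 - \tfrac23$. Hence $\log_2\binom{3m+\ell}{m} = m\log_2(27/4) + O(\log m)$. Assembling the pieces with $n = 3m + O(1)$, I obtain $\log_2 P[K_n = K] = -n + \log_2 F(m,\ell) + O(1) = -3m + m\log_2(27/4) + O(\log m) = m\log_2(27/32) + O(\log m) = \tfrac{n}{3}\log_2(27/32) + O(\log n)$. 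Dividing by $n$, exponentiating, and noting $O(\log n)/n \to 0$ gives $P[K_n = K] = \alpha^{(1+o(1))n}$ with $\alpha = (27/32)^{1/3} = \sqrt[3]{27/32}$, as claimed.

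The one genuine obstacle is that the two terms of $F(m,\ell)$ each carry a $\Theta(m^2)$ prefactor and very nearly cancel: a naive leading-order expansion of $A(m)\binom{N}{m} - B(m)\binom{N}{<m}$, using $\binom{N}{<m}/\binom{N}{m} \to 1$, vanishes not only at order $m^2$ but also at order $m$, so the growth rate of $F$ cannot be read off from either term or from a crude expansion. The sandwich above is precisely what sidesteps this, bounding $F$ below by $|I'(w,m)|$ and above by its first term, both of which have the clean rate $\binom{3m+\ell}{m}$; the delicate polynomial cancellation is then harmlessly absorbed into the $(1+o(1))$ exponent, which only cares about the exponential scale.
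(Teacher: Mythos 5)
Your proof is correct and takes essentially the same route as the paper: sandwich the relevant count between polynomial-in-$n$ multiples of $\binom{3m+\ell}{m}$, then apply Stirling's formula via the binary entropy, using $H(\tfrac13)-1=\tfrac13\log_2\tfrac{27}{32}=\log_2\alpha$. The only (harmless) difference is where the sandwich comes from: you bound the closed formula $F(m,\ell)$ directly --- dropping the nonnegative subtracted term for the upper bound, and for the lower bound using $|I(w,m)|\ge|I'(w,m)|$ together with the geometric estimate $\binom{3m+\ell}{<m}<2\binom{3m+\ell}{m-1}$ --- whereas the paper instead bounds the ballot-theorem term $\frac{n-3|L|+1}{n-|L|+1}\binom{n}{|L|}$ inside the proof of Proposition~\ref{count-I} and propagates those bounds through the summation in Lemma~\ref{count-II}.
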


\begin{proof}
As usual, we denote $n = 3m+\ell$, where $\ell$ is a reduced length for the knot $K$. We estimate this probability by examining the counting argument again. 

A rough estimate for (\ref{eq:needlater}) in the proof of Proposition~\ref{count-I} is $\tfrac{1}{n}\tbinom{n}{|L|} \leq \tfrac{n-3|L|+1}{n-|L|+1}\tbinom{n}{|L|} \leq \tbinom{n}{|L|}$, which yields \mbox{$\tfrac{1}{n}\tbinom{n}{m} < |I'(w,m)| < n\tbinom{n}{m}$} after the summation over $|L|$. In Lemma~\ref{count-II}, this count goes through another summation that brings the final count to the range $\tfrac{1}{n}\tbinom{n}{m} < |I(w,m)| < n^3\tbinom{n}{m}$. 

Since polynomial factors will be absorbed into the $o(1)$ term in the exponent, we only need to analyze the asymptotics of $\tbinom{n}{m} = \tbinom{3m+\ell}{m}$.

A standard application of Stirling's formula to the binomial coefficient yields that $\tfrac{1}{n}\log\tbinom{n}{pn}$ converges uniformly in $p$ to the binary entropy function $H(p) = -p\log (p)-(1-p)\log(1-p)$, where it is customary to use a base-two logarithm. 

Since $\ell$ is constant, this means that $\frac{1}{3m+\ell}\log\tbinom{3m+\ell}{m} \to H(\tfrac13)$ as $m \to \infty$. By the above discussion, it follows that 
$$ \tfrac{1}{n}\log P\left[K_n  = K\right] \;\to\; \left(H(\tfrac13)-1\right) \;=\; \tfrac13 \log\tfrac{27}{32} \;=\; \log\alpha, $$ 
as $n \to \infty\;$.
\end{proof}

\section{Crossing number distribution}
\label{sec:crossing}

With Theorem \ref{thm:main} in hand, we proceed to a study of the probability mass function of the crossing number of a random two-bridge knot~$K_n $ and find its asymptotics for large $n$.

\begin{theorem}
\label{cor:main}
For $n \equiv 0$ or $1$ mod $3$ and $c \in \{3,\dots,n\}$,
$$ P\left[\text{\textnormal{c}}\left(K_n \right) = c\right] \;=\;   \sum\limits_{\substack{k \in \{0,\dots,c-2\} \\ c+k \equiv n \text{\textnormal{ (mod 3)}}}} \frac{\binom{c-2}{k}}{2^{n-1}} \cdot F\left(\frac{n-c-k}{3}, \;c+k\right) $$
where $F(m,\ell)$ is as in Theorem~\ref{thm:main}.
\end{theorem}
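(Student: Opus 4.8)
The plan is to reduce the computation to pure counting by means of Proposition~\ref{cross}, which equates the crossing number $\text{c}(K_n)$ with the number of runs in the unique reduced word to which the random word of length $n$ reduces. The first step is to record that the reduction process assigns to each $w' \in \{0,1\}^n$ a single reduced word, so the fibers of this assignment partition $\{0,1\}^n$. Since the insertions of Definition~\ref{def:insertions} are precisely the inverse moves of reduction, the fiber over a reduced word $w$ of length $\ell$ is exactly $I(w,(n-\ell)/3)$. Summing over reduced words rather than over knots then sidesteps the multiplicity $r(K)$ altogether, and I would write
$$ P\!\left[\text{c}(K_n)=c\right] \;=\; \frac{1}{2^n}\sum_{w}\left|I\!\left(w,\tfrac{n-|w|}{3}\right)\right|, $$
the sum ranging over all reduced words $w$ with exactly $c$ runs subject to $|w|\le n$ and $|w|\equiv n \pmod 3$.

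The second step is to enumerate the reduced words with $c$ runs. By Definition~\ref{def:reduced} such a word is an alternation of runs whose first and last runs have length one and whose $c-2$ internal runs each have length one or two. Writing $k$ for the number of internal runs of length two, the word has length $\ell = 2 + 2k + (c-2-k) = c+k$, so $k$ ranges over $\{0,\dots,c-2\}$, recovering the interval $\ell\in\{c,\dots,2c-2\}$ of Corollary~\ref{moshe}. For fixed $k$ there are $\binom{c-2}{k}$ ways to choose which internal runs are long and two ways to choose the leading letter, which then determines the entire alternating pattern; hence there are $2\binom{c-2}{k}$ reduced words of length $c+k$ with $c$ runs. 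The divisibility requirement that $(n-\ell)/3$ be a nonnegative integer is exactly the congruence $c+k\equiv n \pmod 3$.

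The final step is to apply Lemma~\ref{count-II}: because $|I(w,m)|=F(m,\ell)$ depends only on $m$ and $\ell$, each of the $2\binom{c-2}{k}$ reduced words of length $c+k$ contributes the common value $F\!\left(\tfrac{n-c-k}{3},\,c+k\right)$. Grouping the displayed sum by $k$ gives
$$ P\!\left[\text{c}(K_n)=c\right] \;=\; \frac{1}{2^n}\sum_{\substack{k\in\{0,\dots,c-2\}\\ c+k\equiv n \pmod{3}}} 2\binom{c-2}{k}\,F\!\left(\tfrac{n-c-k}{3},\,c+k\right), $$
and writing $2/2^n = 1/2^{n-1}$ yields the stated formula.

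The length bookkeeping and the final simplification are routine; the point requiring the most care is the partition claim, that the fibers of the reduction map coincide with the sets $I(w,m)$ and that for $c\ge 3$ no genuine reduced word degenerates into one of the short words $\{0,1,00,11\}$ of Remark~\ref{smalll}. This is precisely where the restriction $c\ge 3$ earns its keep: it forces $\ell=c+k\ge 3$, so every $w$ appearing is a bona fide reduced word for which Lemma~\ref{count-II} applies without the unknot corrections, while the values $c\in\{1,2\}$, realized by no knot, correspond to empty sums.
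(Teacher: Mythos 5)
Your proof is correct and follows essentially the same route as the paper: count reduced words with $c$ runs by the number $k$ of two-letter internal runs (giving $2\binom{c-2}{k}$ words of length $c+k$), weight each by $F\bigl(\tfrac{n-c-k}{3},\,c+k\bigr)$ via Lemma~\ref{count-II}, and divide by $2^n$. The only difference is presentational: you spell out the fiber-partition argument (that the reduction map's fibers are exactly the sets $I(w,m)$) and the role of $c\ge 3$ in excluding the degenerate short words of Remark~\ref{smalll}, both of which the paper leaves implicit.
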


\begin{proof}[Proof of Theorem~\ref{cor:main}]
By Proposition~\ref{cross}, the crossing number of a two-bridge knot is equal to the number of runs in the corresponding reduced word. We consider reduced words with $c$ runs, and let $k$ be the number of two-letter runs so that $k \in \{0,\dots,c-2\}$. Choosing which of the $c-2$ internal runs would have two letters and whether the first run would be a $0$ or a $1$ determines a reduced word, and the number of such words is~$2\tbinom{c-2}{k}$.

By Lemma~\ref{count-II}, a reduced word of size $\ell$ can be obtained by $m$ reduction moves from $F(m,\ell)$ different words of $n=\ell+3m$ letters. Hence the number of $n$-letter words that reduce to a given word of $\ell=c+k$ letters is $F(\tfrac{n-c-k}{3},c+k)$ if $n \equiv c+k$ mod $3$, and $0$ otherwise.

The formula is obtained by combining the two above numbers for every admissible value of $k$ and dividing by the total number of $n$-letter words, which is $2^n$.
\end{proof}

\begin{corollary}\label{cor_beta}
$$ \text{\textnormal{c}}\left(K_n \right) \;\;=\;\; (\beta + o(1))n $$
in probability, where $\beta = \frac{\sqrt{5}-1}{4} \approx 0.309 $.
\end{corollary}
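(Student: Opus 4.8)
The plan is to feed the exact mass function of Theorem~\ref{cor:main} into an entropy (large-deviations) estimate and then locate the concentration point by a single constrained optimization. Write $c=\gamma n$ and $k=\kappa n$. As in the proof of Corollary~\ref{prop:main}, the polynomial prefactors of $F$ and the subtracted partial sums are absorbed into a subexponential error, so each summand of $P[c(K_n)=c]$ has the same exponential order as $\binom{c-2}{k}\,2^{-n}\binom{n}{m}$ with $m=(n-c-k)/3$. Invoking the uniform Stirling estimate $\tfrac1N\log_2\binom{N}{pN}\to H(p)$ already used for Corollary~\ref{prop:main}, and observing that the sum over $k$ has only $O(n)$ terms, I would obtain
$$\tfrac1n\log_2 P\big[c(K_n)=c\big]\;=\;\Phi(\gamma)+o(1),\qquad \Phi(\gamma):=\max_{0\le\kappa\le\gamma}\phi(\gamma,\kappa),$$
where $\phi(\gamma,\kappa):=\gamma\,H\!\big(\tfrac{\kappa}{\gamma}\big)+H\!\big(\tfrac{1-\gamma-\kappa}{3}\big)-1$, uniformly in $c$.

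The heart of the matter is the optimization of $\phi$. I would first record that $\phi$ is jointly concave in $(\gamma,\kappa)$: the term $\gamma H(\kappa/\gamma)$ is the perspective of the concave function $H$, hence concave, and $H\big(\tfrac{1-\gamma-\kappa}{3}\big)$ is $H$ precomposed with an affine map. Writing $u=\kappa/\gamma$ and $v=\tfrac{1-\gamma-\kappa}{3}$ and using $H'(p)=\log_2\tfrac{1-p}{p}$, the equation $\partial_\kappa\phi=0$ reads $H'(u)=\tfrac13 H'(v)$, and combining it with $\partial_\gamma\phi=0$ yields $H(u)=(1+u)H'(u)$. The latter collapses to $(1-u)^2=u$, so $u=\tfrac{3-\sqrt5}{2}$ and $\tfrac{1-u}{u}=\varphi:=\tfrac{1+\sqrt5}{2}$. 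Feeding this into $H'(u)=\tfrac13H'(v)$ gives $\tfrac{1-v}{v}=\varphi^3=2+\sqrt5$, hence $v=\tfrac{3-\sqrt5}{4}$; then $\gamma=\tfrac{\gamma+\kappa}{1+u}=\tfrac{1-3v}{1+u}$ simplifies, by a short golden-ratio computation, to $\gamma=\tfrac{\sqrt5-1}{4}=\beta$. Substituting $u=\varphi^{-2}$ and $v=\tfrac12\varphi^{-2}$ back into $\phi$, the coefficient of $\log_2\varphi$ cancels and one finds $\phi=0$ at this point. By joint concavity (with strictness coming from the strict concavity of $H$ transverse to the scaling ray) this interior critical point is the unique maximizer, so $\Phi$ attains its maximum value $0$ uniquely at $\gamma=\beta$.

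To finish, I would upgrade this pointwise rate to concentration in probability. Two elementary normalizations pin down the level: $P[c(K_n)=c]\le1$ forces $\Phi\le0$, while $\sum_c P[c(K_n)=c]=1$ over the $O(n)$ admissible $c$ forces $\max_\gamma\Phi=0$; together with the uniqueness above and the concavity of $\Phi$ (partial maximization preserves concavity), this gives $\Phi(\gamma)<0$ strictly for $\gamma\ne\beta$. Hence, fixing $\varepsilon>0$, the concave $\Phi$ is bounded above by some $-\delta<0$ on $\{|\gamma-\beta|\ge\varepsilon\}$, and
$$P\big[\,|c(K_n)/n-\beta|\ge\varepsilon\,\big]\;\le\;(n+1)\,2^{-\delta n+o(n)}\;\longrightarrow\;0,$$
which is precisely the assertion $c(K_n)=(\beta+o(1))n$ in probability.

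I expect the main obstacle to be the uniformity of the entropy estimate across the entire deviation range $\{|c/n-\beta|\ge\varepsilon\}$, especially near the endpoints where $m=(n-c-k)/3$ or one of the binomial parameters tends to a boundary, so that $P[c(K_n)=c]\le 2^{n\Phi(c/n)+o(n)}$ holds with an $o(n)$ that is uniform in $c$ and in the $k$-summation. Once that uniformity is secured, the perspective-concavity structure makes the optimization and the final tail bound routine.
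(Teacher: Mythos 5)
Your proposal is correct, and its skeleton is the same as the paper's: extract the exponential rate $\phi(x,y)=x\,H(y/x)+H\bigl(\tfrac{1-x-y}{3}\bigr)-1$ from the mass function via entropy estimates, show it is maximized exactly at $x=\beta$ with value $0$, and dispose of the $O(n)$ off-window values of $c$ by a union bound. Where you genuinely diverge is in certifying the maximum. The paper solves $\vec\nabla\phi=0$ directly in $(x,y)$, checks the discriminant condition $\phi_{xx}\phi_{yy}-\phi_{xy}^2>0$ with $\phi_{xx}<0$ for a local maximum, and then separately verifies strict negativity of $\phi$ on the boundary pieces $y=0$, $y=x$, $x+y=1$ to make it global. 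You instead note that $\gamma H(\kappa/\gamma)$ is the perspective of the concave $H$, so $\phi$ is jointly concave, and you collapse the critical-point system via $u=\kappa/\gamma$, $v=\tfrac{1-\gamma-\kappa}{3}$ to the golden-ratio equation $(1-u)^2=u$; your values $u=\tfrac{3-\sqrt5}{2}$, $v=\tfrac{3-\sqrt5}{4}$, $\gamma=\beta$, $\kappa=\tfrac{\sqrt5-2}{2}$ all agree with the paper's critical point $(x_0,y_0)$, and $\phi=0$ there checks out. This buys a cleaner computation and eliminates both the Hessian and the boundary analysis; the paper's route is more pedestrian but needs only one-sided (upper) bounds throughout. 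Three refinements would make your version airtight. First, for uniqueness of the maximizer you need strict concavity along \emph{every} line: the perspective term is linear along rays through the origin, and along those rays strictness must come from $H(v)$ (since $\gamma+\kappa$ varies there), not from the transverse strictness you cite — state both. Second, your normalization argument ($\sum_c P[c(K_n)=c]=1$ pins $\max\Phi=0$) requires a matching exponential \emph{lower} bound on $F$; this is available, since the proof of Corollary~\ref{prop:main} records $\tfrac1n\tbinom{n}{m}<|I(w,m)|<n^3\tbinom{n}{m}$ and the lower bound, coming from the $|L|=m$ term of \eqref{eq:needlater}, holds uniformly in $\ell$ because $\tfrac{\ell+1}{n-m+1}\ge\tfrac1n$ — but note this step is redundant once you have evaluated $\phi=0$ at the unique critical point. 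Third, the uniformity you flag as the main obstacle is not a real one: replace the asymptotic Stirling statement by the non-asymptotic bounds $2^{N H(k/N)}/(N+1)\le\tbinom{N}{k}\le 2^{N H(k/N)}$, valid for all $N$ and $k$ including the degenerate boundary cases, which is implicitly what the paper's $O(\log n)$ error term in the exponent is doing.
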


\begin{proof}
We use a rough estimate
$$ F\left(\frac{n-\ell}{3} ,\;\ell\right) \;\leq\; \frac{n^2}{2} \binom{n}{(n-\ell)/3}, $$
which yields
$$ P\left[\text{\textnormal{c}}\left(K_n \right) = c\right] \;\leq\; \sum\limits_k \frac{n^2}{2^n} \binom{c}{k} \binom{n}{(n-c-k)/3} \;\leq\; \;\max\limits_{k\in\{0,\dots,c\}} \;\frac{n^3}{2^n} \binom{c}{k} \binom{n}{(n-c-k)/3}.$$
Let $\varepsilon>0$. We will show that this bound is $o(1/n)$ uniformly for all $c \not\in [(\beta-\varepsilon)n,(\beta+\varepsilon)n]$. Then it would follow that the probability of such a crossing number is $o(1)$ for every $\varepsilon$ giving $\text{\textnormal{c}} \left(K_n \right)/n \to \beta$ in probability, as required. 

Indeed, estimating the binomial coefficients by the binary entropy function as in the proof of Corollary \ref{prop:main}, we see that
$$ \frac{1}{1/n}\cdot\frac{n^3}{2^n} \binom{c}{k} \binom{n}{(n-c-k)/3} \;=\; 2^{{\displaystyle -n+H\left(\frac{k}{c}\right)c+H\left(\frac{n-c-k}{3n}\right)n+O(\log n)}}. $$
Thus we need to show
$$ 2^{{\displaystyle \left(\phi(x,y)+o(1)\right)n}} \;=\; o(1), $$
where $x=c/n \neq \beta$ and $y=k/n$, and
$$ \phi(x,y) \;=\; H\left(\frac{y}{x}\right) \cdot x + H\left(\frac{1-x-y}{3}\right) - 1. $$
We verify that $\phi(x,y) < 0$ for $x \neq \beta$. Indeed,
$$ \vec\nabla\phi \;=\; \left( \substack{\log(1-x-y)/3-\log(2+x+y)/3-\log(1-y/x) \\ \log(1-x-y)/3-\log(2+x+y)/3+\log(x/y-1) }\right). $$
Solving for a critical point $\phi_x=\phi_y=0$ yields a unique solution $x_0=(\sqrt{5}-1)/4 = \beta$ and $y_0=(\sqrt{5}-2)/2$, at which $\phi(x_0,y_0)=0$. This is a local maximum, as can be verified by checking the positivity of the discriminant $\phi_{xx} \phi_{yy} - \phi_{xy}^2$, while $\phi_{xx}$ is negative at that point. However, it is also a global maximum because $\phi$ is easily shown to be strictly negative on the boundaries $y=0$, $y=x$, and $x+y=1$.
\end{proof}

\bibliographystyle{amsalpha}
\bibliography{16FebBibliography}

\end{document}